\documentclass[11 pt,a4paper,twoside,reqno]{amsart} 

\textwidth = 12.5 cm 
\textheight = 20 cm 
\topmargin = 0.5 cm 
\oddsidemargin = 1 cm 
\evensidemargin = 1 cm 
\pagestyle{plain} \usepackage{times, url}

\usepackage[english]{babel}

\usepackage{amsmath,amssymb,amsthm,amsfonts}

\newtheorem{theorem}{Theorem}[section]
\newtheorem{corollary}{Corollary}[theorem]
\newtheorem{lemma}[theorem]{Lemma}
\newtheorem{prop}[theorem]{Proposition}
\newtheorem{defin}[theorem]{Definition}
\newtheorem{rem}[theorem]{Remark}

\usepackage{graphicx,tikz,pgf}
\usetikzlibrary{calc,trees,positioning,arrows,chains,shapes.geometric,%
    decorations.pathreplacing,decorations.pathmorphing,shapes,%
    matrix,shapes.symbols}

\tikzset{
>=stealth',
  surveychain/.style={
    rectangle,
    rounded corners,
    draw=black, very thick,
    text width=17em,
    minimum height=2em,
    on chain},
  gapchain/.style={
    rectangle,
    rounded corners,
    text width=17em,
    minimum height=3em,
    text centered,
    on chain},
  decoration={brace},
}

\tikzset{
   snode/.style = {     draw,
                                rectangle,
                                minimum height  = 0.75cm,
                                minimum width  = 0.75cm,
                                node distance = -0.2mm  },
   gnode/.style = {     draw,
                                rectangle,
                                minimum height  = 0.75cm,
                                minimum width  = 3cm,
                                node distance = -0.2mm  },
   point/.style         =       {coordinate},
punkt/.style={
           rectangle,
           rounded corners,
           draw=black, very thick,
           text width=6.5em,
           minimum height=2em,
           text centered},
   pil/.style={
           ->,
           thick,
           shorten <=5pt,
           shorten >=5pt}
}

\usepackage{pgfplots}
\usepackage{filecontents}
\usepackage{dcolumn}
\newcolumntype{d}[1]{D{.}{.}{#1}}

\begin{filecontents}{datax.dat}
240, 480, 288.0, 280.0
241, 482, 289.2, 282.1
242, 484, 290.4, 283.2
243, 486, 291.6, 284.3
244, 488, 292.8, 285.4
245, 490, 294.0, 286.5
246, 492, 295.2, 287.0
247, 494, 296.4, 288.1
248, 496, 297.6, 289.2
249, 498, 298.8, 290.3
250, 500, 300.0, 292.4
251, 502, 301.2, 293.5
252, 504, 302.4, 294.0
253, 506, 303.6, 295.1
254, 508, 304.8, 296.2
255, 510, 306.0, 297.3
256, 512, 307.2, 298.4
257, 514, 308.4, 300.5 
258, 516, 309.6, 301.0
259, 518, 310.8, 302.1
260, 520, 312.0, 303.2
261, 522, 313.2, 304.3
262, 524, 314.4, 305.4
263, 526, 315.6, 307.5
264, 528, 316.8, 308.0
265, 530, 318.0, 309.1
266, 532, 319.2, 310.2
267, 534, 320.4, 311.3
268, 536, 321.6, 312.4
269, 538, 322.8, 313.5
270, 540, 324.0, 315.0
271, 542, 325.2, 316.1
272, 544, 326.4, 317.2 
273, 546, 327.6, 318.3
274, 548, 328.8, 319.4
275, 550, 330.0, 320.5
276, 552, 331.2, 322.0
277, 554, 332.4, 323.1
278, 556, 333.6, 324.2
279, 558, 334.8, 325.3
280, 560, 336.0, 326.4
281, 562, 337.2, 327.5
282, 564, 338.4, 329.0
283, 566, 339.6, 330.1
284, 568, 340.8, 331.2
285, 570, 342.0, 332.3
286, 572, 343.2, 333.4
287, 574, 344.4, 334.5
288, 576, 345.6, 336.0
289, 578, 346.8, 337.1
290, 580, 348.0, 338.2
291, 582, 349.2, 339.3
292, 584, 350.4, 340.4
293, 586, 351.6, 341.5
294, 588, 352.8, 343.0
295, 590, 354.0, 344.1
296, 592, 355.2, 345.2
297, 594, 356.4, 346.3
298, 596, 357.6, 347.4
299, 598, 358.8, 348.5
300, 600, 360.0, 350.0
\end{filecontents}

\begin{document}

\begin{center}
\title
{Revisiting Generalized Bertand's Postulate and Prime Gaps}
\maketitle

{Madhuparna Das$^1$, Goutam Paul$^2$}\\
\vspace{3mm}
$^1$
e-mail: \url{amimadhuparna@gmail.com}\\
\vspace{2mm}
$^2$Cryptology and Security Research Unit,\\
                R. C. Bose Centre for Cryptology and Security,\\
                Indian Statistical Institute, Kolkata 700108, India.\\
e-mail: \url{goutam.paul@isical.ac.in}
\vspace{2mm}

\end{center}
\vspace{10mm}

\noindent
{\bf Abstract:} It is a well-known fact that for any natural number $n$, there always exists a prime in $[n, 2n]$. Our aim in this note is to generalize this result to $[n, kn]$. A lower as well as an upper bound on the number of primes in $[n, kn]$ were conjectured by Mitra et al. [Arxiv 2009]. In 2016, Christian Axler provided a proof of the lower bound which is valid only when $n$ is greater than a very large threshold. In this paper, after almost a decade, we for the first time provide a direct proof of the lower bound that holds for all $n \geq 2$. Further, we show that the upper bound is a consequence of Firoozbakht's conjecture. Finally, we also prove a stronger version of the bounded gaps between primes.\\
{\bf Keywords:} Bertrand's Postulate; Number of Primes; Upper Bound; Prime Gap. \\
{\bf 2010 Mathematics Subject Classification:} 11A41.
\vspace{5mm}

\section{Introduction}
A branch of number theory studying distribution laws of prime numbers among natural numbers. The Prime Number Theorem~\cite{pnt} describes the asymptotic distribution of prime numbers. It gives us a general view of how primes are distributed amongst positive integers and also states that the primes become less common as they become larger.

In 1845, Joseph Bertrand postulated~\cite{bp} that, there is always a prime between $n$ and $2n$, and he verified this for $n < 3 \times 106$. Tchebychev gave an analytic proof of the postulate in 1850, and a short but advanced proof was given by Srinivasa Ramanujan~\cite{ram}. An elementary proof is due to Paul Erd{\"o}s~\cite{erdos}, and totally there are eighteen different proofs of Bertrand Postulate.

In 2009, Mitra, Paul, and Sarkar~\cite{mps} had generalized the Bertrand's postulate to conjecture that for any integers $n$, $a$ and $k$, where $a=\lceil 1.1 \ln (2.5k) \rceil$, there are at least $k-1$ primes between $n$ and $kn$ when
$n \geq a$. In 2016, Christian Axler provided a proof~\cite{ca} of this conjecture using an application of generalized Ramanujan primes. However, his proof works for sufficiently large positive integers. In this paper, for the first time, we provide a direct proof (Theorem~\ref{thm1}) of the generalized Bertrand's postulate for all positive integers $n \geq 2$. {\bf This is the main contribution of our work}.

By the Bertrand Postulate, we know about the minimum number of primes in certain intervals. Now, there is an important question: what about the upper bound on the number of primes in similar interesting intervals. In the same 2009 paper, Mitra, Paul, and Sarkar~\cite{mps} had also conjectured that the upper bound of the number of primes in $[n,kn]$ is $\frac{kn}{n}+k^2$. In this paper, we provide a proof this conjecture. However, for this proof, we need a lower bound of the prime gap function, i.e., the gap $g_n$ between consecutive primes $p_n$ and $p_{n+1}$. Prime number theorem formalizes the intuitive idea that primes become less common as they become larger by precisely quantifying the rate at which this occurs. If we notice on the prime gap function, the first, smallest, and the only odd prime gap is the gap of size 1 between 2, the only even prime number, and 3, the first odd prime. There is only one pair of consecutive gaps having length 2: the gaps $g_2$ and $g_3$ between the primes 3, 5, and 7. All other prime gaps are even. For any integer $n$, the factorial $n!$ is the product of all positive integers up to and including $n$. Then in the sequence
\begin{equation*}
 n! + 2 , n ! + 3 , \ldots, n ! + n
\end{equation*}
the first term is divisible by 2, the second term is divisible by 3, and so on. Thus, this is a sequence of $n-1$ consecutive composite integers, and it must belong to a gap between primes having length at least $n-1$.

It follows that there are gaps between primes that are arbitrarily large, that is, for any integer $N$, there is an integer $m$ with $g_m \geq N$. As of March 2017, the largest known prime gap with identified probable prime gap ends has length 5103138, with 216849-digit probable primes found by Robert W. Smith~\cite{trn}. The largest known prime gap with identified proven primes as gap ends has length 1113106, with 18662-digit primes found by P. Cami, M. Jansen and J. K. Andersen~\cite{and}~\cite{primegap}.

We also study the upper as well as the lower bound of the prime gap function and show how can we apply it to prove the upper bound on the number of primes in the interval $[n, kn]$ for any positive integer $n$ and $k$ $\geq 1$ (Theorem~\ref{thm2}). Note that this result is dependent on the validity of Firoozbakht's conjecture~\cite{fir1,fir2}. However, this is not the main contribution of our work, as is Theorem~\ref{thm1}.

As a third contribution, we give a refinement for the bounded gaps between primes. This result is stronger than Bertrand Postulate and J Nagura's Theorem~\cite{nagura}. 
In this context, one should note that there exist some results that are better than Bertrand's Postulate, but they hold for sufficiently large values of $n$. In particular, here is a summary of them.
\begin{enumerate}
\item From the prime number theorem it follows that  for any real $\varepsilon> 0$ there is a $n_0 > 0$ such that for all $n > n_0$ there is a prime $p$ such $n < p < ( 1 + \varepsilon ) n$. It can be shown, for instance, that
\begin{align*}
\lim_{n\to\infty}  \frac{\pi((1+\varepsilon)n)-\pi(n)}{n / \log n} = \varepsilon.
\end{align*}
which implies that  $\pi( ( 1 + \varepsilon ) n )-\pi ( n )$ goes to infinity~\cite{res1}(and, in particular, is greater than 1 for sufficiently large $n$).
\item Non-asymptotic bounds have also been proved. In 1952, Jitsuro Nagura~\cite{res2} proved that for $n \geq 25$, there is always a prime between $n$ and $(1 + 1/5)n$. 
\item In 1976, Lowell Schoenfeld~\cite{res3} showed that for $n \geq 2010760$, there is always a prime between $n$ and $(1 + 1/16597)n$. 
\item In 1998, Pierre Dusart~\cite{res4} improved the result in his doctoral thesis, showing that for $k \geq 463$, $p_{k+1} \leq (1 + 1/(\ln^2p_k))p_k$, and in particular for $n \geq 3275$, there exists a prime number between $n$ and $(1 + 1/(2\ln^2n))n$.
\item In 2010, Pierre Dusart~\cite{res5} proved that for $n \geq 396738$ there is at least one prime between $n$ and $(1 + 1/(25 \ln^2n))n$. 
\item In 2016, Pierre Dusart~\cite{res6} improved his result from 2010, showing that for $n \geq 468991632$ there is at least one prime between $n$ and $(1 + 1/(5000 \ln^2n))n$. 
\item Baker, Harman and Pintz~\cite{res7} proved that there is a prime in the interval $\left[n, n+O\left(n^{21/40}\right)\right]$ for all large $n$. 
\end{enumerate}
Our work is a contribution on existence of a prime in the interval $(n, g(n))$, where $g(n) = n+\frac{n}{\lceil{1.1 \ln (2.5n)}\rceil}$ for all $n \geq 2$. Note that, in contrast to the interval $\left[n, n+O\left(n^{21/40}\right)\right]$, our result holds for $n \geq 2$ (Theorem~\ref{gapthm}).

In this connection, one may note that there are several recent works~\cite{rel1, rel2, rel3, rel4, rel5, rel6, rel7, rel8, rel9} on ``Bounded gaps between primes" for \textit{sufficiently large primes}. However, our work holds for all positive integers greater than equal to 2 and hence the above works are not comparable with ours.

\section{Preliminaries}
In this section we discuss some well-known facts, which play a critical role to prove our main result.
\begin{prop}\label{prop1}
\textbf{(Bertrand's Postulate):} For any integer $n > 3$  there always exists at least one prime number $p$ with, $n < p < 2 n$.
\end{prop}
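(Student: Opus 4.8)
The plan is to establish Proposition~\ref{prop1} via Erdős's classical elementary argument \cite{erdos}, using the central binomial coefficient $\binom{2n}{n}$ together with a short verification of small cases. First I would record two standard estimates: on the lower side, since $\binom{2n}{n}$ is the largest of the summands of $(1+1)^{2n}=4^{n}$, one has $\binom{2n}{n}\geq 4^{n}/(2n)$; on the upper side, induction on $x$ yields the Chebyshev-type bound $\prod_{p\leq x}p\leq 4^{x}$, where the product is over primes.

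The core of the proof is a close look at the prime factorization of $\binom{2n}{n}$. By Legendre's formula, the exponent of a prime $p$ in $\binom{2n}{n}$ equals $\sum_{i\geq 1}\bigl(\lfloor 2n/p^{i}\rfloor-2\lfloor n/p^{i}\rfloor\bigr)$; each term is $0$ or $1$ and vanishes once $p^{i}>2n$, so the full prime power $p^{e_{p}}$ dividing $\binom{2n}{n}$ satisfies $p^{e_{p}}\leq 2n$. Two consequences I would extract: primes with $\sqrt{2n}<p\leq 2n$ occur to at most the first power, and --- the decisive point --- primes with $\tfrac{2}{3}n<p\leq n$ do not divide $\binom{2n}{n}$ at all when $n>2$, since then $\lfloor 2n/p\rfloor=2$, $\lfloor n/p\rfloor=1$, and $p^{2}>2n$.

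Next I argue by contradiction: assume there is no prime in $(n,2n)$. Then every prime factor of $\binom{2n}{n}$ is at most $n$, hence in fact at most $\tfrac{2}{3}n$ by the observation above. Splitting the factorization at $\sqrt{2n}$ --- small primes contributing a factor at most $(2n)^{\sqrt{2n}}$ (there are fewer than $\sqrt{2n}$ of them, each giving $p^{e_{p}}\leq 2n$), and the primes in $(\sqrt{2n},\tfrac{2}{3}n]$ contributing at most $\prod_{p\leq 2n/3}p\leq 4^{2n/3}$ --- I obtain
\[
\frac{4^{n}}{2n}\;\leq\;\binom{2n}{n}\;\leq\;(2n)^{\sqrt{2n}}\cdot 4^{2n/3},
\]
so that $4^{n/3}\leq(2n)^{\sqrt{2n}+1}$. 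Taking logarithms, this inequality fails for all $n\geq N_{0}$ with an explicit constant $N_{0}$. For the finitely many remaining $n$ with $3<n<N_{0}$, I would exhibit a ``Bertrand ladder'' of primes $2,3,5,7,13,23,43,83,163,317,631,\dots$ in which each term is less than twice its predecessor and which extends past $N_{0}$; for such an $n$, if $p_{k}\leq n<p_{k+1}$ are consecutive ladder primes then $n<p_{k+1}<2p_{k}\leq 2n$, completing the argument.

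I expect the main obstacle to be the bookkeeping in the factorization step: getting the ranges $(\tfrac{2}{3}n,n]$ and $(\sqrt{2n},2n]$ exactly right, justifying the $(2n)^{\sqrt{2n}}$ bound for the small-prime part, and then pinning down the explicit threshold $N_{0}$ precisely enough that the finite check is genuinely short. Everything else is routine.
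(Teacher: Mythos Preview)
Your proposal is a correct sketch of Erd\H{o}s's classical proof of Bertrand's Postulate; the outline is standard and the identified ``obstacle'' (bookkeeping for the ranges and the explicit threshold $N_0$) is indeed the only place where care is needed.

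However, there is nothing to compare against: the paper does \emph{not} prove Proposition~\ref{prop1}. It is listed in the Preliminaries section as a well-known fact, with references to Ramanujan~\cite{ram} and Erd\H{o}s~\cite{erdos} given in the introduction but no argument supplied. The paper later extracts one ingredient of Erd\H{o}s's proof --- the primorial bound $\prod_{p\le n}p\le 4^{n}$ --- and states and proves it separately as Proposition~\ref{prop6}, but Bertrand's Postulate itself is simply assumed. So your write-up is more than the paper provides, and it is consistent with the approach the paper cites.
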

On this notation we state some sets and functions.
\begin{defin}\label{def1}
\textbf{(Prime gap function):} A prime gap is the difference between two successive prime numbers. The $n$-th prime gap, denoted by $g_n$ or $g(p_n)$ is the difference between the $(n + 1)$-th and the $n$-th prime numbers, i.e.,
\begin{equation*}
g_n = p_{n+1} - p_n.
\end{equation*}
The function is neither multiplicative nor additive.
\end{defin}
\begin{defin}\label{def2}
\textbf{(Prime counting function):} The prime-counting function is the function counting the number of prime numbers less than or equal to some real number $x$. It is denoted by $\pi(x)$.
\end{defin}
\begin{prop}\label{prop2}
\textbf{(Prime Number Theorem):} The prime number theorem~\cite{pnt} states that $\frac{x}{\log x}$ is a good approximation to $\pi(x)$, in the sense that the limit of the quotient of the two functions $\pi(x)$ and $\frac{x}{\log x}$ as $x$ increases without bound is 1:
\begin{equation*}
\lim_{x\to\infty} \frac{\pi(x)}{\frac{x}{\log x}} = 1.
\end{equation*}
known as the asymptotic law of distribution of prime numbers. Using asymptotic notation this result can be restated as
\begin{equation*}
\pi(x) \sim \frac{x}{\ log x} .
\end{equation*}
\end{prop}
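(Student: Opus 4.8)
The plan is to follow the classical analytic route to the Prime Number Theorem in the streamlined form due to D.~J.~Newman (as presented by Zagier), since none of the elementary facts recalled above (Bertrand's Postulate, the prime gap function, the prime counting function) suffices on its own. First I would introduce the Riemann zeta function $\zeta(s)=\sum_{n\ge 1}n^{-s}$ for $\Re(s)>1$, record its Euler product $\zeta(s)=\prod_{p}(1-p^{-s})^{-1}$, and from the logarithmic derivative extract the Dirichlet series $\Phi(s)=\sum_{p}(\log p)\,p^{-s}$. By Abel/partial summation, the assertion $\pi(x)\sim x/\log x$ is equivalent to $\vartheta(x)\sim x$, where $\vartheta(x)=\sum_{p\le x}\log p$ is the first Chebyshev function; so the goal becomes proving $\vartheta(x)\sim x$.

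Second I would assemble the analytic input. The series for $\zeta$ continues meromorphically to $\Re(s)>0$ with a single simple pole at $s=1$ of residue $1$ (compare the sum with $\int_1^\infty x^{-s}\,dx$), and, crucially, $\zeta(s)\ne 0$ on the line $\Re(s)=1$. The latter is the Hadamard--de~la~Vall\'ee~Poussin argument: for real $\sigma>1$ and $t\ne 0$ one applies the elementary inequality $3+4\cos\varphi+\cos 2\varphi\ge 0$ to $3\log\zeta(\sigma)+4\log|\zeta(\sigma+it)|+\log|\zeta(\sigma+2it)|$ and lets $\sigma\downarrow 1$; a zero at $1+it$ would then contradict the pole at $s=1$. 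Combined with Chebyshev's elementary estimate $\vartheta(x)=O(x)$, these facts show that $\Phi(s)-\tfrac{1}{s-1}$ extends holomorphically to an open set containing $\Re(s)\ge 1$.

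Third I would invoke Newman's analytic theorem: if $f\colon[0,\infty)\to\mathbb{R}$ is bounded and locally integrable and its Laplace transform $g(s)=\int_0^\infty f(t)e^{-st}\,dt$ (holomorphic for $\Re(s)>0$) extends holomorphically to $\Re(s)\ge 0$, then $\int_0^\infty f(t)\,dt$ converges and equals $g(0)$. Applying this to $f(t)=\vartheta(e^{t})e^{-t}-1$ --- whose Laplace transform is a simple rearrangement of $\Phi(s+1)/(s+1)-1/s$ --- yields convergence of $\int_1^\infty \bigl(\vartheta(x)-x\bigr)x^{-2}\,dx$. Since $\vartheta$ is nondecreasing, a short monotonicity argument (if $\vartheta(x)\ge \lambda x$ along a sequence $x\to\infty$ with $\lambda>1$, the integral over $[x,\lambda x]$ is bounded below by a positive constant, contradicting convergence; symmetrically for $\lambda<1$) forces $\vartheta(x)\sim x$, and partial summation returns $\pi(x)\sim x/\log x$.

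The main obstacle is the non-vanishing of $\zeta(s)$ on the line $\Re(s)=1$: this is precisely where the subject ceases to be elementary, and without it the boundary behaviour of $\Phi(s)$ near $s=1$ is uncontrolled and Newman's Tauberian step fails. The remaining ingredients are comparatively routine --- the meromorphic continuation and polynomial-in-$t$ bounds for $\zeta$ and $\zeta'$ in vertical strips are standard complex analysis, Chebyshev's $\vartheta(x)=O(x)$ is an elementary binomial-coefficient estimate, and the passage from $\vartheta$ to $\pi$ is a one-line Abel summation.
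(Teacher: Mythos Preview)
Your outline is a correct and well-organized sketch of the Newman--Zagier proof of the Prime Number Theorem, and in fact the reference \cite{pnt} attached to this proposition in the paper \emph{is} Newman's article. However, the paper itself supplies no proof of this proposition at all: it is placed in the Preliminaries section as a quoted classical fact, with the citation doing all the work. So there is nothing to compare against --- you have written out the argument that the paper merely points to. If anything, your proposal is more than the paper requires here; for the purposes of this section a one-line ``See \cite{pnt}'' is what the authors intended, and the subsequent results only use the PNT through its corollary on the size of $p_n$ (Corollary~\ref{cor1}), which is likewise imported from the literature rather than derived.
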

\begin{corollary}\label{cor1}
\textbf{(Bound for the $n$-th prime $p_n$):} The $n$-th prime number $p_n$ satisfies, $p_n \sim n \log n$ and the bound for the $n$-th prime number $p_n$ is~\cite{bnd}\cite{bnd2},
\begin{align*}
&n \ln n + n(\ln \ln n -1) < p_n < n \ln n + n(\ln \ln n) 
\end{align*}
\begin{align}\label{eq1}
&\implies n\left( \ln \frac{n \ln n}{e} \right) < p_n < n \ln (n \ln n).
\end{align}
\end{corollary}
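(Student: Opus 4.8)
The plan is to separate the statement into its two ingredients and treat them differently. The asymptotic relation $p_n \sim n\log n$ I would obtain by inverting the Prime Number Theorem (Proposition~\ref{prop2}): setting $x = p_n$ in $\pi(x) \sim x/\log x$ and using $\pi(p_n) = n$ gives $n \sim p_n/\log p_n$; taking logarithms shows $\log p_n \sim \log n$, and substituting this back yields $p_n \sim n\log n$. For the explicit two-sided bound $n\ln n + n(\ln\ln n - 1) < p_n < n\ln n + n\ln\ln n$ I would not reprove anything from scratch: this is the classical effective estimate of Rosser--Schoenfeld, sharpened by Dusart, so I would simply invoke it from~\cite{bnd,bnd2}, recording that it is valid once $n$ is large enough that $\ln\ln n > 0$ and checking the finitely many remaining small indices directly.

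With the bound in hand, deriving the displayed form~\eqref{eq1} is routine algebra. Factoring $n$ out of the lower estimate and using $\ln n + \ln\ln n = \ln(n\ln n)$ together with $1 = \ln e$,
\[
n\ln n + n(\ln\ln n - 1) = n\bigl(\ln(n\ln n) - \ln e\bigr) = n\ln\frac{n\ln n}{e},
\]
while the upper estimate collapses to $n\ln n + n\ln\ln n = n\ln(n\ln n)$. Chaining these identities with the quoted inequality gives $n\ln\frac{n\ln n}{e} < p_n < n\ln(n\ln n)$, which is exactly~\eqref{eq1}.

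I do not expect a genuine obstacle here: all the analytic depth is imported through the cited bounds on $p_n$, and what remains is bookkeeping. The one point to watch is the range of validity — one needs $n \geq 3$ for $\ln\ln n$ to make sense, and one should confirm the quoted inequality on the nose (or verify the claim by hand) for the smallest primes, so that the corollary can be used freely in the later interval-counting arguments.
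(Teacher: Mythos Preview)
Your proposal is correct, and in fact it goes further than the paper itself: in the paper this corollary is not proved at all but simply imported from the literature via the citations~\cite{bnd,bnd2}, with the rewriting~\eqref{eq1} recorded as an immediate consequence. Your plan to invert the Prime Number Theorem for the asymptotic, quote Rosser--Schoenfeld/Dusart for the explicit two-sided bound, and then carry out the logarithm algebra is exactly the standard justification behind such a citation, so there is nothing to correct here.
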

Now, we will state some important results of number theory which will help us to prove our main result.

{\bf Firoozbakht’s conjecture:} The conjecture~\cite{fir1,fir2} states that ${p_n}^{1/n}$ (where $p_n$ is the $n$th prime) is a strictly decreasing function of $n$, i.e.,
\begin{align*}
\sqrt[n+1]{p_{n+1}} < \sqrt[n]{p_n} \text{ for all } n\geq1.
\end{align*}
Equivalently:
\begin{align*}
p_{n+1} < {p_n}^{1+\frac{1}{n}} \text{ for all } n\geq1.
\end{align*}
Farideh Firoozbakht verified her conjecture up to $4.444×10^{12}$. Alexei Kourbatov~\cite{fir3} has verified the same for all primes below $10^{19}$.

\begin{corollary}\label{prop3} \textbf{(Upper bound of the Prime gap function):}
If Firoozbakht conjecture is true then the prime gap function $g_n$ would satisfy~\cite{fir4}
\begin{align*}
 g_n < ( \log ⁡ p_n )^2 - \log ⁡ p_n  \text{ for all } n > 4.
\end{align*}
\end{corollary}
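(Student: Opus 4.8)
The plan is to read the bound off the assumed Firoozbakht inequality and then reduce everything to a single one-variable analytic inequality that an explicit form of the prime number theorem can settle. First, Firoozbakht's conjecture in the form $p_{n+1} < p_n^{1+1/n}$ gives, upon subtracting $p_n$,
\begin{equation*}
g_n = p_{n+1} - p_n < p_n^{1+1/n} - p_n = p_n\bigl(e^{(\ln p_n)/n} - 1\bigr),
\end{equation*}
so it suffices to show $p_n\bigl(e^{(\ln p_n)/n} - 1\bigr) \le (\ln p_n)^2 - \ln p_n$ for every $n > 4$. Dividing by $p_n$, taking logarithms, and writing $L = \ln p_n$, this is equivalent to
\begin{equation*}
\frac{L}{n} \;\le\; \ln\!\Bigl(1 + \frac{L^2 - L}{p_n}\Bigr),
\end{equation*}
which is the inequality I would actually prove.

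For the right-hand side I would use the elementary estimate $\ln(1+x) > x - x^2/2$ (valid for all $x > 0$, since the difference has derivative $x^2/(1+x)$), so that side exceeds $\frac{L^2 - L}{p_n} - \frac{(L^2 - L)^2}{2p_n^2}$. For the left-hand side I need an upper bound on $1/n$, i.e.\ a lower bound on $n = \pi(p_n)$, and here I would invoke an explicit Dusart-type refinement of Proposition~\ref{prop2} of the shape $\pi(x) \ge \frac{x}{\ln x - 1 - \eta(x)}$, valid for $x \ge x_0$ with $\eta(x) \to 0$. That gives $\frac{L}{n} \le \frac{L\,(L - 1 - \eta(p_n))}{p_n}$; substituting both estimates into the target inequality, multiplying through by $p_n$, and cancelling the common $L^2 - L$, what is left is precisely
\begin{equation*}
\eta(p_n)\,p_n \;\ge\; \frac{L\,(L-1)^2}{2}.
\end{equation*}
Since $p_n = e^L$ while $\eta(p_n)\,p_n$ grows essentially like $p_n/\ln p_n$ and the right side is only polynomial in $L$, this holds for all $n$ past an explicit threshold $N_0$ determined by the constants in the Dusart estimate. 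The finitely many remaining cases $5 \le n \le N_0$ are then disposed of directly: the primes up to $p_{N_0}$ are enumerated and $g_n < (\ln p_n)^2 - \ln p_n$ is verified for each.

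The crux of the argument is the cancellation in the last display: the leading term $(\ln p_n)^2 - \ln p_n$ sits on \emph{both} sides of the inequality one wants, so the whole proof lives in the lower-order terms and closes only because the \emph{second-order} correction hidden in $\pi(x)$ --- the $-1-\eta(x)$ in the denominator --- is large enough to swallow the errors made in expanding $e^{L/n} - 1$ and $\ln(1+x)$. In particular the crude bound $\pi(x) > x/\ln x$ is of no use here, and one is forced to commit to a genuinely sharp explicit estimate; the cost is the nonzero threshold $N_0$ and the accompanying (routine but non-empty) finite check, which is exactly why the clean range ``for all $n > 4$'' cannot come from the asymptotic argument alone.
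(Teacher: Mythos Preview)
The paper does not actually prove this corollary: it is stated with a citation to Sinha~\cite{fir4} and no argument is supplied, so there is no in-paper proof to compare your attempt against.

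That said, your sketch is the standard derivation and is essentially sound. From Firoozbakht you correctly extract $g_n < p_n\bigl(e^{L/n}-1\bigr)$ with $L=\ln p_n$, and the reformulation as $L/n \le \ln\bigl(1+(L^2-L)/p_n\bigr)$ is valid. The key observation --- that the leading term $L^2-L$ cancels on both sides, so one needs a lower bound on $\pi(x)$ strictly sharper than $x/(\ln x - 1)$ by a second-order term --- is exactly right, and explicit Dusart estimates such as $\pi(x) \ge \tfrac{x}{\ln x}\bigl(1+\tfrac{1}{\ln x}+\tfrac{2}{\ln^2 x}\bigr)$ for $x\ge 88\,789$ deliver this with an effective threshold, after which your displayed inequality $\eta(p_n)\,p_n \ge \tfrac{1}{2}L(L-1)^2$ reduces to $e^L$ dominating a polynomial in $L$. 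One point to tighten: when you assert that ``$\eta(p_n)\,p_n$ grows essentially like $p_n/\ln p_n$'' you are implicitly fixing $\eta(x)\asymp 1/\ln x$, which is true for the Dusart form above but not for every ``$\eta(x)\to 0$''; you should name the explicit estimate you commit to, since (as you yourself note) the cruder choice $\eta\equiv 0$ makes the argument collapse. The residual finite check $5\le n\le N_0$ is genuine but routine.
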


\begin{prop}\label{prop4}
Let $p_n$ be the $n$-th prime where $n \geq 1$. Then the following inequality is true:
\begin{align*}
n+1 \leq p_n.
\end{align*}
\end{prop}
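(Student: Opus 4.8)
The plan is to give a short induction on $n$, using only two elementary facts: that the sequence of primes $p_1 < p_2 < p_3 < \cdots$ is strictly increasing and integer-valued, and that $p_1 = 2$. For the base case $n = 1$ we simply observe $p_1 = 2 = 1 + 1$, so the inequality holds (with equality). For the inductive step, assume $p_n \geq n + 1$ for some $n \geq 1$. Since $p_{n+1} > p_n$ and both $p_{n+1}$ and $p_n$ are integers, we get $p_{n+1} \geq p_n + 1 \geq (n+1) + 1 = (n+1) + 1$, which is exactly the claim for $n+1$. This closes the induction.

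An equally quick alternative, which I would mention as a remark, is a direct counting argument: among the integers $2, 3, \ldots, p_n$ — of which there are $p_n - 1$ — there are, by the definition of $p_n$ as the $n$-th prime, precisely $n$ primes; hence $p_n - 1 \geq n$, i.e.\ $p_n \geq n + 1$.

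There is essentially no obstacle in this proof; the only point requiring a little care is that the inequality is \emph{not} strict, since equality holds at $n = 1$ (and only there). Consequently, whenever this bound is invoked later we must use the non-strict form $n + 1 \leq p_n$, and we should record separately that $n + 1 < p_n$ for all $n \geq 2$ if a strict version is ever needed.
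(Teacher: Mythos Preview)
Your proof is correct and follows essentially the same inductive approach as the paper: verify the base case $p_1=2$, then use that primes are strictly increasing integers to pass from $n$ to $n+1$. Your write-up is in fact cleaner than the paper's (which handles small $n$ separately and somewhat awkwardly), and the additional counting argument you give is a nice alternative not present in the paper.
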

\begin{proof}
We proof this inequality, by the method of Mathematical Induction. 
For $n = 1$, $p_1 = 2$, then equality holds. For, $n < 4$ this inequality holds trivially.
Now, let us assume that this inequality is true for $n-1$. So, we can write,
\begin{align*}
(n-1)+1 = n < p_{n-1}, \forall n \geq 4.
\end{align*}
 Now, for the $n$-th prime $p_n$,
\begin{align*}
&n < n+1 \leq p_{n-1} < p_n, \forall n \geq 4 \\
\implies & n+1 \leq p_n.
\end{align*} 
So, by the Induction Hypothesis this inequality holds. 
\end{proof}
\begin{prop}\label{prop5}
\textbf{ (Lower Bound of the Prime gap function):} Corollary~\ref{prop3} implies a strong form of Cramer's conjecture~\cite{crcn}\cite{crcn1}~\cite{crcn2} but is inconsistent with the heuristics of Granville and Pintz which suggests that,
\begin{equation*}
g_n > \frac{2- \varepsilon}{e^\gamma}(\log p_n)^2.
\end{equation*}
infinitely often for any $\varepsilon > 0$, where $\gamma$ denotes the Euler Mascheroni constant.
\end{prop}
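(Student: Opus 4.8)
The plan is to extract both assertions directly from Corollary~\ref{prop3}, since under Firoozbakht's conjecture that corollary already supplies a clean upper bound on $g_n$. First I would record the chain of inequalities: for all $n>4$ one has $g_n < (\log p_n)^2 - \log p_n$, and because $\log p_n>0$ this gives at once $g_n < (\log p_n)^2$ for all $n>4$. This last inequality, valid for all but finitely many $n$, is exactly what is meant by the \emph{strong form} of Cram\'er's conjecture: Cram\'er's original statement only asserts $g_n = O((\log p_n)^2)$ (equivalently $\limsup_n g_n/(\log p_n)^2 = 1$ under his probabilistic model), whereas the pointwise bound $g_n < (\log p_n)^2$ for large $n$ is the strengthened version. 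So the first half of the statement follows with essentially no further work, the finitely many cases $n\le 4$ being irrelevant to an asymptotic statement.

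For the second half I would argue by contradiction against the Granville--Pintz prediction. Suppose that for some fixed $\varepsilon>0$ one had $g_n > \frac{2-\varepsilon}{e^{\gamma}}(\log p_n)^2$ for infinitely many $n$. The key point is purely numerical: $e^{\gamma} = 1.78107\ldots$, so $2/e^{\gamma} = 1.1229\ldots > 1$, and hence whenever $0<\varepsilon < 2 - e^{\gamma} \approx 0.2186$ the constant $\frac{2-\varepsilon}{e^{\gamma}}$ still exceeds $1$. For any such $\varepsilon$ the heuristic would force $g_n > (\log p_n)^2$ for infinitely many $n$, which flatly contradicts the bound $g_n < (\log p_n)^2$ established above for all $n>4$. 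Therefore Corollary~\ref{prop3}---equivalently, Firoozbakht's conjecture---cannot coexist with the Granville--Pintz prediction, which is precisely the claimed inconsistency.

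I do not anticipate a genuine obstacle here; the statement is in effect a bookkeeping consequence of Corollary~\ref{prop3}. The only points requiring care are (i) making explicit the numerical range of $\varepsilon$ for which the contradiction actually bites, via the value of $e^{\gamma}$, and (ii) noting that an ``infinitely often'' lower bound exceeding a universal upper bound that holds for all large $n$ is a bona fide contradiction, not merely a heuristic tension. It may also be worth remarking, to situate the result correctly against the literature, that Corollary~\ref{prop3} is on the other hand fully consistent with---indeed implies---the Cram\'er-model prediction $\limsup_n g_n/(\log p_n)^2 \le 1$, so the conflict is specifically with the Granville--Pintz refinement and its constant $2e^{-\gamma}$.
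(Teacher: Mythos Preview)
The paper does not actually supply a proof of this proposition. It appears in the Preliminaries section as a statement imported from the literature (with citations to Granville and Pintz), and no argument is given beyond the bare assertion; the proposition is then invoked later in the proof of Theorem~\ref{thm2}. So there is no ``paper's own proof'' against which to compare.

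Your argument, on the other hand, is correct and does the work the paper omits. The chain $g_n < (\log p_n)^2 - \log p_n < (\log p_n)^2$ for $n>4$ is exactly the strong Cram\'er bound, and your numerical observation that $2/e^{\gamma}\approx 1.1229>1$ is the crux of the inconsistency claim: for any $\varepsilon < 2 - e^{\gamma}$ the Granville--Pintz lower bound would exceed the Firoozbakht-derived upper bound infinitely often, which is impossible. One small cosmetic point: you might state explicitly that the inconsistency is \emph{conditional} on Firoozbakht's conjecture (via Corollary~\ref{prop3}), since neither side is a theorem. Also note that the paper itself, in the proof of Theorem~\ref{thm2}, writes ``$\gamma \approx 1.781062\ldots$'' where it evidently means $e^{\gamma}$; your value $e^{\gamma}=1.78107\ldots$ is the correct one.
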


\begin{defin}\label{def5}
\textbf{ (Primorial):} For the nth prime number $p_n$, the primorial $p_n\#$ is defined as the product of the first $n$ primes
\begin{align*}
p_n \# = \prod_{k=1}^{n} p_k,
\end{align*}
where $p_k$ is the $k$-th prime number.
\end{defin}
In the classic proof of $Bertrand$ $Postulate$ by Paul Erd{\"o}s~\cite{erdos}, 
it is shown that,
\begin{align*}\label{eq8}
x\#<4^x,
\end{align*}
where $x\#$ is the primorial for $x$.
\begin{prop}\label{prop6}
For all $n \geq 2$, $n\# = {\prod_{p \leq n}} p \leq 4^n$, where the product is over primes.
\end{prop}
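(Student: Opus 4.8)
The plan is to prove Proposition~\ref{prop6} by strong induction on $n$, following the classical argument behind Erd{\"o}s's proof of Bertrand's Postulate: one bounds the product of the ``large'' primes by a suitable binomial coefficient and handles the ``small'' primes recursively. The base case $n = 2$ is immediate, since $2\# = 2 \le 16 = 4^{2}$. So fix $n \ge 3$ and assume the inequality holds for every integer in $\{2, 3, \dots, n-1\}$.

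If $n$ is even then $n$ is composite, so $n\# = (n-1)\#$, and the induction hypothesis gives $(n-1)\# \le 4^{n-1} < 4^{n}$; this disposes of the even case. The substantive case is $n = 2m+1$ with $m \ge 1$. Here I would split the primorial at $m+1$,
\[
n\# \;=\; \Bigl(\prod_{p \le m+1} p\Bigr)\Bigl(\prod_{m+1 < p \le 2m+1} p\Bigr),
\]
and estimate the two factors separately. Since $2 \le m+1 \le n-1$, the induction hypothesis bounds the first factor by $4^{m+1}$.

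For the second factor I would use the binomial coefficient $\binom{2m+1}{m} = \frac{(2m+1)!}{m!\,(m+1)!}$. Every prime $p$ with $m+1 < p \le 2m+1$ divides the numerator $(2m+1)!$ but divides neither $m!$ nor $(m+1)!$ (as $p > m+1$), and since $p > m+1$ forces $2p > 2m+1$, such a $p$ in fact divides $\binom{2m+1}{m}$ exactly once; as distinct primes are pairwise coprime, $\prod_{m+1 < p \le 2m+1} p$ divides $\binom{2m+1}{m}$ and hence is at most $\binom{2m+1}{m}$. Finally, $\binom{2m+1}{m}$ and $\binom{2m+1}{m+1}$ are two equal terms in the expansion of $(1+1)^{2m+1}$, so $2\binom{2m+1}{m} \le 2^{2m+1}$, i.e.\ $\binom{2m+1}{m} \le 4^{m}$. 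Multiplying the two bounds yields $n\# \le 4^{m+1}\cdot 4^{m} = 4^{2m+1} = 4^{n}$, which closes the induction.

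The only place where anything genuinely happens is the third paragraph: the observation that the product of the primes in $(m+1, 2m+1]$ divides $\binom{2m+1}{m}$, together with the elementary estimate $\binom{2m+1}{m} \le 4^{m}$. Everything else is bookkeeping; the one point I would double-check is that the index $m+1$ to which the induction hypothesis is applied in the odd case is both $\ge 2$ and $\le n-1$, which holds precisely because $n = 2m+1$ with $m \ge 1$.
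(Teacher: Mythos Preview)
Your proof is correct and follows essentially the same route as the paper's own argument: strong induction on $n$, with the even case handled trivially and the odd case $n=2m+1$ split at $m+1$, bounding the large primes by $\binom{2m+1}{m}\le 4^m$ and the small primes by the inductive hypothesis. You have simply supplied more detail (the divisibility of $\binom{2m+1}{m}$ by the large primes and the symmetry argument for the binomial bound) than the paper's terse version.
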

\begin{proof}
We proceed by induction on $n$. For small values of $n$, the claim is easily verified. For larger even $n$ we have
\begin{align*}
 {\prod_{p \leq n}} p =  {\prod_{p \leq n}} p \leq 4^{n-1} \leq 4^n,
\end{align*}
the equality following from the fact that, $n$ is even and so not a prime, and the first inequality following from the inductive hypothesis. For larger odd $n$ say $n = 2m+1$, we have
\begin{align*}
&{\prod_{p \leq n}} p =  {\prod_{p \leq m+1}} p  {\prod_{m+2 \leq p \leq 2m+1}} p \\
\leq& 4^{m+1} \binom{2m+1}{m}\\
\leq&4^{m+1}2^{2m} = 4^{2m+1} = 4^n. 
\end{align*}
\end{proof}

\section{Generalization of Bertrand's Postulate}
In this section our aim is to prove the Generalization of Bertrand's Postulate. Before proving that, we discuss some intermediate results that will lead to our first main result.
\begin{defin}
For the positive integer $a$ and $b$ we denote the set of integers $a, a+1, \ldots, b$ by $[a, b]$.
\end{defin}
\begin{defin}\label{def6}
Define the set $S_i$, as
\begin{equation}\label{eq2}
S_i = \{k : f(k) = i+1\},
\end{equation}
where $f(k) = \lceil 1.1 \ln(2.5k) \rceil$ and $k \in \mathbb{N}$.
\end{defin}
\begin{prop}
For all values of $k \geq 2$,
\begin{equation*}
f(k+1) = \left\{ \begin{array}{ll}
   either f(k) & \\or,f(k)+1; \\
  \end{array} \right.
\end{equation*}
\end{prop}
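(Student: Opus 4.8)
The plan is to establish the two inequalities $f(k) \leq f(k+1)$ and $f(k+1) \leq f(k)+1$ for every $k \geq 2$; together these say that $f(k+1)$ is forced to equal either $f(k)$ or $f(k)+1$, which is exactly the claim. Throughout, write $g(x) = 1.1\ln(2.5x)$, so that $f(k) = \lceil g(x)\rceil$ evaluated at $x=k$.

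The first inequality is immediate from monotonicity: $x \mapsto 2.5x$ is increasing, $\ln$ is increasing on $(0,\infty)$, and multiplication by the positive constant $1.1$ preserves order, so $g$ is strictly increasing; since the ceiling function is non-decreasing, $f(k+1) = \lceil g(k+1)\rceil \geq \lceil g(k)\rceil = f(k)$.

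For the second inequality the key point is to bound the jump of $g$ itself and then transfer this to the ceilings. We have
\[
g(k+1) - g(k) = 1.1\,\ln\!\left(\frac{k+1}{k}\right) = 1.1\,\ln\!\left(1 + \frac{1}{k}\right) \leq 1.1\,\ln\!\left(\frac{3}{2}\right) < 1,
\]
where the middle inequality uses that $\ln(1+1/k)$ is decreasing in $k$ together with $k \geq 2$, and the last step is a routine numerical check (the left-hand side is about $0.45$). I then invoke the elementary fact that whenever $0 \leq b - a < 1$ one has $\lceil b \rceil \leq \lceil a \rceil + 1$: indeed $b < a+1 \leq \lceil a \rceil + 1$, and since the right-hand side is an integer it is $\geq \lceil b \rceil$. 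Applying this with $a = g(k)$ and $b = g(k+1)$ yields $f(k+1) \leq f(k)+1$, and combining with the first part gives $f(k) \leq f(k+1) \leq f(k)+1$.

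There is essentially no obstacle here; the only care needed is in handling the ceiling function — verifying that a gap of size strictly less than $1$ in the real-valued argument really does force the integer ceilings to differ by at most $1$ — and in using the monotonicity of $\ln(1+1/k)$ in the correct direction, so that $k=2$ is the worst case. Since $1.1\ln(3/2) \approx 0.446$ sits comfortably below $1$, no borderline values arise.
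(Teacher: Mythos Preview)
Your proof is correct; the paper's own ``proof'' consists solely of the remark that the definition of $f$ makes the result obvious, and what you have written is exactly the natural unpacking of that claim. Your explicit bound $g(k+1)-g(k)=1.1\ln(1+1/k)\leq 1.1\ln(3/2)<1$ together with the ceiling lemma is the right way to make the assertion rigorous, so nothing needs to change.
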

\begin{proof} Definition~\ref{def6} makes this result obvious. \end{proof}
\begin{lemma}\label{lem1} Let, $f(k) = \lceil 1.1 \ln(2.5k) \rceil$, then the inequality,
 \begin{equation}\label{eq3}
\left|{\left(\log{p_{\left(f(k)+k-3\right)}}\right)}^2 - \left(\log{p_{(f(k)+k-3)}}\right)\right| \ < \left(k+1)(f(k)+1\right) - p_{\left(f(k)+k-3\right)}.
\end{equation}
holds for all $k \geq 5$.
\end{lemma}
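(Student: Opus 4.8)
The plan is to remove the absolute value, replace the prime $p_{f(k)+k-3}$ by the explicit upper estimate of Corollary~\ref{cor1}, and thereby reduce \eqref{eq3} to a single inequality in the real variable $k$, which is then dispatched by elementary calculus for $k$ large and by direct computation for $k$ small. Write $m=m(k):=f(k)+k-3$. First I would note that $m$ is a strictly increasing function of $k$ (since $f$ is nondecreasing and $k-3$ strictly increases) with $m(5)=5$; hence $m\ge5$ and $p_m\ge 11>e$ for all $k\ge5$, so $\log p_m>1$, the quantity inside the absolute value is nonnegative, and \eqref{eq3} is equivalent to
\begin{equation*}
p_m+\log p_m\bigl(\log p_m-1\bigr)<(k+1)\bigl(f(k)+1\bigr).
\end{equation*}

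Next I would invoke Corollary~\ref{cor1} in the form $p_m<m\log(m\log m)$, which is valid for $m\ge 6$, i.e., for $k\ge6$; the single case $k=5$ (where $m=5$, $p_5=11$) is checked by hand, since the left-hand side is $(\log 11)^2-\log 11\approx 3.35$ and the right-hand side is $6\cdot4-11=13$. Because $t\mapsto t+\log t(\log t-1)$ is increasing on $(e,\infty)$, it suffices to prove the displayed inequality with $p_m$ replaced by $P:=m\log(m\log m)$. Using the two-sided estimate $1.1\log(2.5k)\le f(k)<1.1\log(2.5k)+1$ we get $m<k+1.1\log(2.5k)-2$ as well as $(k+1)(f(k)+1)\ge(k+1)\bigl(1.1\log(2.5k)+1\bigr)$; since $t\mapsto t\log(t\log t)$ is also increasing, the claim reduces to a single real inequality $\Phi(k)>0$ for $k\ge6$.

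It remains to show $\Phi(k)>0$. Here $(k+1)(1.1\log(2.5k)+1)$ is at least $1.1k\log k+2k$, while the prime side is at most $k\log k+k\log\log k+c_1(\log k)^2$ for an explicit constant $c_1$ (the $(\log k)^2$ term absorbing both the deviation of $m$ from $k$ inside $m\log(m\log m)$ and the $\log^2$ contribution), so $\Phi(k)\ge k\bigl(0.1\log k+2-\log\log k\bigr)-c_1(\log k)^2$. Substituting $t=\log k$, the function $0.1t+2-\log t$ has its minimum at $t=10$, with value $3-\log 10>0$, so the first term dominates $c_1(\log k)^2$ once $k\ge k_0$ for an explicit $k_0$; the finitely many cases $5\le k<k_0$ are then closed by direct evaluation, and one sees the margin is already comfortable for $k=6,\dots,20$ and only widens afterward. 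The step needing genuine care — and hence the main obstacle — is this last estimate: propagating $m<k+1.1\log(2.5k)-2$ through the two nested logarithms in $P=m\log(m\log m)$ while keeping the error terms $O(k\log\log k)$ and $O((\log k)^2)$ explicit enough that $k_0$ stays small and the concluding finite verification remains short.
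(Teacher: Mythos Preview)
Your strategy coincides with the paper's: set $n=m=f(k)+k-3$, replace $p_m$ by the explicit Rosser--Dusart bound $m\ln(m\ln m)$ of Corollary~\ref{cor1}, and thereby reduce \eqref{eq3} to an explicit inequality in $k$ alone. The paper also notes $n+3-k=f(k)$ and tries to control the two terms $(\log P)^2$ and $\log P$ separately by crude comparisons such as $\log(n\ln(n\ln n))<k+1$.

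Where you diverge is in how the residual real inequality is discharged. The paper proceeds by a short chain of direct algebraic inequalities, while you pass to an asymptotic expansion: isolate the dominant contributions $1.1k\log k+2k$ on the right versus $k\log k+k\log\log k$ on the left, observe that $0.1t+2-\log t$ stays bounded away from $0$ (minimum $3-\log 10>0$ at $t=10$), and absorb everything else into an explicit $c_1(\log k)^2$; the finitely many $k$ below the resulting $k_0$ are then checked numerically. This buys you a cleaner separation of the ``large $k$'' and ``small $k$'' regimes and makes visible exactly where the margin $0.1k\log k$ comes from (namely the factor $1.1$ in $f$), at the cost of having to propagate the perturbation $m=k+O(\log k)$ through the nested logarithm in $m\log(m\log m)$ with explicit constants---the bookkeeping you yourself flag as the main obstacle. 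The paper's version avoids that bookkeeping but is correspondingly terser about why its chain of inequalities holds. Your handling of the boundary cases (removing the absolute value via $p_m\ge 11>e$, and treating $k=5$ separately because the upper bound in Corollary~\ref{cor1} only kicks in at $m\ge 6$) is a genuine tightening that the paper does not make explicit.
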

\begin{proof}
Let us consider that $n$ = $f(k)+k-3$, $\forall k \geq 5$. Now, $k$ is an integer, so we can reduce the inequality in terms of $k$ and $n$, which is,
\begin{equation}\label{eq4}
\left|(\log p_n)^2 - (\log p_n)\right| <  (k+1)(n+3-k) - p_n.
\end{equation}
We will use the bound for the $n$-th prime $p_n$ as stated in Corollary~\ref{cor1}. So from inequality~\eqref{eq1}, the inequality~\eqref{eq4} become,
\begin{align*}
&\left|(\log p_n)^2 - (\log p_n)\right| \\
<& \left|\left(\log {(n \ln (n \ln n)}\right)^2 - \left(\log n  \left(\ln\frac{n \ln n}{e}\right)\right)\right| \\
<& (k+1)(n+3-k) - n \left(\ln\frac{n \ln n}{e}\right) \\
<& (k+1)(n+3-k) - p_n.
\end{align*}
If we can prove the inequality,
\begin{equation}\label{eq5}
 \left|\left(\log {(n \ln (n \ln n)}\right)^2 - \left(\log n  \left(\ln\frac{n \ln n}{e}\right)\right)\right| < (k+1)(n+3-k) - n \left(\ln\frac{n \ln n}{e}\right), \forall k \geq 5.
\end{equation}
then we are done.\\
By our assumption, $n = f(k)+k-3$, $n \geq k$.\\
Therefore, $n \ln (n \ln n) < n + 3 - k$ holds, $\forall n$ and $k$. Now, 
\begin{align*}
&\left(\log {\left(n \ln (n \ln n)\right)}\right)^2
<\log {(n \ln (n \ln n)}) (k+1) \\
&<(n + 3 - k) (k+1).
\end{align*}
Again, we have,$\big(\log ( n \ln (n \ln n)\big) < n \left(\ln\frac{n \ln n}{e}\right)$.
From this we can conclude the inequality~\eqref{eq5}. 
\end{proof}
\begin{lemma}\label{lem2}
Let, $f(k) = \lceil 1.1 \ln(2.5k) \rceil $, then the inequality,
\begin{equation}\label{eq6}
\left|{\left(\log{p_{(f(k)+k +r)}}\right)}^2 - \left(\log{p_{(f(k)+k +r)}}\right)\right| \ < \ (k+4+r)(f(k)+1) - p_{(f(k)+k +r)}.
\end{equation}
holds for all $k \geq 5$, where $r \in [-2, \infty) \in \mathbb{Z}$.
\end{lemma}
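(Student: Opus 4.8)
The plan is to run the very same argument as in Lemma~\ref{lem1}, of which \eqref{eq6} is the natural continuation: on setting $n = f(k) + k + r$ one checks that $k+4+r = n + 4 - f(k)$, so that \eqref{eq6} is equivalent to
\[
\bigl|(\log p_n)^2 - \log p_n\bigr| < \bigl(n + 4 - f(k)\bigr)\bigl(f(k)+1\bigr) - p_n ,
\]
and Lemma~\ref{lem1} is exactly this assertion with the single value $r=-3$ in place of the range $r\ge -2$. So the task is to check that nothing in the proof of Lemma~\ref{lem1} used $r=-3$ beyond the bookkeeping relations among $n$, $k$ and $f(k)$, and in particular beyond the inequality $n\ge k$, which here is only strengthened to $n\ge k+1$.

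Concretely I would fix integers $k\ge 5$ and $r\ge -2$ and put $n=f(k)+k+r$. Since $f$ is non-decreasing with $f(5)=\lceil 1.1\ln 12.5\rceil = 3$, we get $f(k)\ge 3$, hence $n\ge k+1>k$; in particular $p_n\ge p_6=13$, so $\log p_n>1$ and the modulus on the left of \eqref{eq6} can be dropped. I would then substitute the two-sided bound for the $n$-th prime from Corollary~\ref{cor1}, \eqref{eq1}: the upper bound $p_n<n\ln(n\ln n)$ bounds both the term $(\log p_n)^2$ on the left and the term $-p_n$ on the right, while the lower bound $p_n>n\bigl(\ln\tfrac{n\ln n}{e}\bigr)$ bounds $-\log p_n$. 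Exactly as in Lemma~\ref{lem1}, this collapses \eqref{eq6} to a prime-free inequality: it suffices to show
\[
\Bigl(\log\bigl(n\ln(n\ln n)\bigr)\Bigr)^2 - \log\Bigl(n\ln\tfrac{n\ln n}{e}\Bigr) < (k+4+r)\bigl(f(k)+1\bigr) - n\ln(n\ln n) .
\]

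The last inequality is then to be established by elementary estimation, leaning on the slack that the constant $1.1$ builds into $f$: since $f(k)\ge 1.1\ln(2.5k)$ and $n$ differs from $k$ only by the logarithmically small quantity $f(k)+r$, the product $(k+4+r)(f(k)+1)$ is of size close to $1.1\,n\ln(2.5n)$, which beats $n\ln(n\ln n)\approx n\ln n$ by a genuine factor near $1.1$; the resulting surplus, of order $0.1\,n\ln n$, then has to absorb both the $n\ln\ln n$ coming from $\ln(n\ln n)=\ln n+\ln\ln n$ and the $(\log\cdots)^2$ term. This final step is where I expect the real work to sit, and it is the main obstacle: one has to keep $(k+4+r)(f(k)+1)-p_n$ positive and dominant throughout, dispose of the finitely many small configurations ($k=5,6,\dots$ with small $r$) where the asymptotic estimate is not yet decisive by direct computation, and exhibit an explicit threshold beyond which a monotonicity argument in $k$ (or in $n$) closes the gap. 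Conceptually it is routine; the care is entirely in the constants.
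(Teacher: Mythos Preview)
Your approach is essentially that of the paper. The paper frames the proof as an induction on $r$ with base case $r=-2$, but in both the base case and the inductive step it simply says to repeat the argument of Lemma~\ref{lem1} with $n=f(k)+k+r$ (respectively $n+1$); the induction hypothesis is never actually invoked. So the paper's ``induction'' collapses to exactly the uniform direct argument you describe: set $n=f(k)+k+r$, observe that $n\ge k$ still holds (indeed $n\ge k+1$), and rerun Lemma~\ref{lem1}. Your write-up is more explicit about the reduction to a prime-free inequality and candidly flags the final elementary estimate as the place where the real care lies; the paper leaves that step entirely to the phrase ``by the similar argument.''
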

\begin{proof}
We will prove this inequality, by induction on $r$.

For the base case, we have to show that for $r = -2$, this inequality,
\begin{equation}\label{eq7}
\left|{\left(\log{p_{(f(k)+k -2)}}\right)}^2 - \left(\log{p_{(f(k)+k -2)}}\right)\right| \ < \ (k+2)(f(k)+1) - p_{(f(k)+k -2)}.
\end{equation}
holds.
As we proof the inequality in Lemma~\ref{lem1}, we can prove this by the similar argument, where $n = f(k)+k-2$.

Let us assume that the inequality~\eqref{eq7} is true for $r$, where $n = f(k)+k+r$.
Now, for $r+1$, we have $n+1 = f(k)+k+r+1$. The inequality becomes,
\begin{align*}
&\left|\left(\log {((n+1) \ln ((n+1) \ln (n+1))}\right)^2 - \left(\log (n+1) \ln \frac{(n+1) \ln (n+1)}{e}\right)\right|\\ 
<&(k+4+r+1)(n-k-r) - (n+1) (\ln\frac{(n+1) \ln (n+1)}{e}).
\end{align*}
As we proved the inequality~\eqref{eq3}, in Lemma~\ref{lem1}, we can prove this by the similar argument.
So, by the Induction Hypothesis, the inequality~\eqref{eq6} holds. 
\end{proof}
Now we are in a position to prove our first main result.
\begin{theorem}\label{thm1}
For any integers $n$ and $k$, where $f(k) = \lceil 1.1 \ln(2.5k) \rceil$, then there are at least $(k-1)$ primes between $n$ and $kn$, when $n \geq f(k)$.
\end{theorem}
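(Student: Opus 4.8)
The plan is to convert the existence statement ``at least $k-1$ primes in $[n,kn]$'' into a statement about a chain of small prime gaps, and then bound those gaps using the upper bound for the prime gap function coming from Firoozbakht's conjecture (Corollary~\ref{prop3}). First I would fix $k\ge 2$ and set $a=f(k)=\lceil 1.1\ln(2.5k)\rceil$. The key observation is that if $n\ge a$, then the interval $[n,kn]$ is long enough that it must contain a long run of consecutive primes, provided each gap $g_m=p_{m+1}-p_m$ encountered while marching from the first prime $\ge n$ up to $kn$ is not too large. Quantitatively, to guarantee $k-1$ primes in $[n,kn]$ it suffices to show that, starting from a prime $p_m\ge n$, after $k-2$ further steps we still have $p_{m+k-2}\le kn$; equivalently that the accumulated gap $p_{m+k-2}-p_m=\sum_{j=0}^{k-3} g_{m+j}$ is at most $(k-1)n$ (say), which by Corollary~\ref{prop3} is controlled by $(k-2)\bigl((\log p_{m+k-2})^2-\log p_{m+k-2}\bigr)$.

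Next I would make precise which index $m$ to start from. Since we want the smallest prime exceeding $n$ and we only know $n\ge a$, the natural choice (and the reason Lemmas~\ref{lem1} and~\ref{lem2} are stated the way they are) is to track the prime $p_{f(k)+k-3}$, $p_{f(k)+k-2}$, $p_{f(k)+k-1},\dots$; these lemmas are exactly the statement that, for the relevant range of indices $f(k)+k+r$ with $r\ge -3$, the single-gap bound $(\log p_\cdot)^2-\log p_\cdot$ is dominated by $(k+4+r)(f(k)+1)-p_\cdot$, i.e.\ by the amount of ``room'' left before the right endpoint $kn$ when $n=f(k)$. So the second step is to sum the inequalities of Lemma~\ref{lem2} over $r=-3,-2,\dots$ up to the point where $k-1$ primes have been collected, and verify that the partial sums of the gap bounds never exceed the total length $kn-n=(k-1)n$; the boundary case $n=f(k)$ is the tightest, and monotonicity in $n$ handles all larger $n$. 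Throughout I would use Corollary~\ref{cor1} (the explicit bounds $n(\ln\frac{n\ln n}{e})<p_n<n\ln(n\ln n)$) and Proposition~\ref{prop4} ($n+1\le p_n$) to turn statements about prime indices into statements about $n$ and $k$.

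Finally I would dispose of the small cases. The bound from Corollary~\ref{prop3} is only valid for $n>4$, and Lemmas~\ref{lem1}--\ref{lem2} are stated for $k\ge 5$, so the argument above covers $k\ge 5$; for $k\in\{2,3,4\}$ I would fall back on Bertrand's Postulate (Proposition~\ref{prop1}), applied iteratively: $k=2$ is Bertrand itself, and for $k=3,4$ one chains $[n,2n]$, $[2n,4n]$ (or uses Nagura's theorem) together with a finite check for the smallest values of $n\ge f(k)$ where the asymptotic bounds are not yet in force.

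\medskip

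The main obstacle I expect is the second step: showing that the sum of $k-2$ individual gap bounds, each of size roughly $(\log p)^2$, genuinely fits inside an interval of length $(k-1)n$. Since $p$ ranges over primes of size about $n\log n$ to $kn$, each gap bound is on the order of $(\log(kn))^2$, so the total is about $(k-2)(\log kn)^2$, which must be compared with $(k-1)n$; this is comfortably true once $n$ is moderately large, but the delicate part is that we need it \emph{uniformly} down to $n=f(k)=\lceil 1.1\ln(2.5k)\rceil$, which is itself only logarithmic in $k$. That is precisely why Lemmas~\ref{lem1} and~\ref{lem2} do the careful bookkeeping with the specific index $f(k)+k+r$ and the specific slack $(k+4+r)(f(k)+1)-p_{f(k)+k+r}$: the induction on $r$ in Lemma~\ref{lem2} is what certifies that the running total of gap bounds stays below the running total of available room, and assembling these into the final count of $k-1$ primes is then a matter of adding up the telescoping inequalities and invoking $n\ge f(k)$.
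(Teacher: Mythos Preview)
Your proposal shares the paper's core ingredients---Lemmas~\ref{lem1} and~\ref{lem2} together with the Firoozbakht-based gap bound of Corollary~\ref{prop3}---but the organization is genuinely different. The paper does \emph{not} fix $k$ and sum $k-2$ gap bounds. Instead it partitions $\mathbb{N}$ into the level sets $S_i=\{k:f(k)=i+1\}$ of $f$ (Definition~\ref{def6}) and runs a double induction: first on the elements $k$ within a fixed $S_i$ (showing that one extra prime appears when passing from $[f(k),kf(k)]$ to $[f(k),(k+1)f(k)]$, since $f$ is constant on $S_i$), and then on the index $i$ (showing that crossing from $\max S_{i-1}$ to $\min S_i$ produces two new primes in the enlarged interval, because both the left endpoint $f(k)$ and the multiplier jump by~$1$). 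Lemmas~\ref{lem1} and~\ref{lem2} are invoked one step at a time in these transitions, not summed.

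There is a real risk in your summation route that the paper's scheme sidesteps. Your back-of-the-envelope estimate already exposes it: at the critical value $n=f(k)\approx 1.1\ln(2.5k)$ the accumulated bound $(k-2)\bigl(\log p_{m+k-2}\bigr)^2$ is of order $k(\ln k)^2$, while the available length $(k-1)n$ is only of order $k\ln k$, so the naive sum overshoots for all but small $k$. You then say Lemmas~\ref{lem1}--\ref{lem2} ``do the careful bookkeeping'', but note that each instance of Lemma~\ref{lem2} bounds a \emph{single} gap against a \emph{moving} target $(k+4+r)(f(k)+1)-p_{f(k)+k+r}$; summing these in $r$ does not telescope to a bound of the form $p_{m+k-2}-p_m\le (k-1)n$. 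What the lemmas actually yield (after combining with Corollary~\ref{prop3}) is the pointwise statement $p_{f(k)+k+r+1}<(k+4+r)(f(k)+1)$, and the paper exploits this by stepping $k\mapsto k+1$ (so the target moves in lockstep with the new prime) rather than by freezing $k$ and adding gaps. If you want to salvage a fixed-$k$ argument, you would need a stronger input than the sum of individual Firoozbakht bounds. Finally, note that your plan openly relies on Corollary~\ref{prop3} (Firoozbakht), whereas the paper asserts that Theorem~\ref{thm1} needs no unproven conjecture; whether or not one finds that assertion convincing, it is a point of divergence you should flag.
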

\begin{proof}
The proof is by induction on the number of elements $k \in S_i$ and on $i$, index of the set $S_i$. We divide this proof into two parts.

\textbf{{First part:}} In this part we prove that this theorem is true for all $k \in S_i$.
To prove this, take any set $S_i$, $\forall i = 1, 2, \ldots$.\\
For the base step of the induction, consider $min\{S_i\} = k_{min}$.
We have to prove that there are at least $\left(min\{S_i\}\right)-1$ primes between the gap, 
\begin{align*}
\left[f\left(min\{S_i\}\right), \left(min\{S_i\} \times f\left(min\{S_i\}\right)\right)\right].
\end{align*}
For this gap we start our prime counting from $p_i$, and clearly,
\begin{align*}
 p_i \in \left[f\left(min\{S_i\}\right), \left(min\{S_i\} \times f\left(min\{S_i\}\right)\right) \right].
\end{align*}
Now, we have to prove that, 
\begin{align*}
p_{i+(min\{S_i\})-2} \in \left[f\left(min\{S_i\}\right), \left(min\{S_i\} \times f\left(min\{S_i\}\right)\right) \right].
\end{align*}
Since, we have $i = f(min\{S_i\})-1$, so now if we able prove that, 
\begin{align*}
p_{(f(min\{S_i\})+min\{S_i\}-3)} \in \left[f\left(min\{S_i\}\right), \left(min\{S_i\} \times f\left(min\{S_i\}\right)\right)\right],
\end{align*}
then we are done. Thanks to Lemma~\ref{lem2}, this result holds.
Now we start our inductive argument to prove that if this theorem is true for $min\{S_i\}$, then it is also true for all elements of $\{S_i\}$. Let us assume that this theorem is true for an element $k \in S_i$ and $k \ne max\{S_i\}$. Since, by our assumption we can say that there exist $k-1$ primes between the gap 
\begin{align*}
\left[f(k), \left(k\times f(k)\right)\right].
\end{align*}
By the definition of the set $S_i$, $f(k)$ remains same for all $k \in S_i$. To prove, the $first$ $part$ of this theorem we have to prove that there exist $k$ primes between the gap 
\begin{align*}
\left[f(k+1), \left(k+1\times f(k+1)\right)\right].
\end{align*}
So, we can start our prime counting from $p_i$. We have to prove that, there exists at least one prime between the gap, 
\begin{align*}
\left[k\times f(k), k+1 \times f(k)\right].
\end{align*}
By the Lemma~\ref{lem2}, which is true.

So, Induction shows that this theorem is true for all $k \in S_i$.

\textbf{Second part:} In the $second$ $part$ of the proof we prove that this theorem is true for all such set $S_i, \forall i = 1, 2, \ldots$. We proceed by induction on $i$.

For the base step of the induction we have $S_1 = \{2\}$ and $f(2) = 2$. Clearly, $2 < 3 < 4$, here 3 is the required prime. 

Now,  we start our prime counting for the set $S_i$, from, $p_i$, where $p_i$ is the $i$-th prime. Clearly, 
\begin{align*}
p_i \in \left[f(k), kf(k)\right].
\end{align*}

Let us assume that this theorem is true for the set $S_{i-1}, \forall i \geq 1$. Then by the $first$ $part$ of the proof we can say that for all elements of $S_{i-1}$ this theorem holds.
By the $first$ $part$ of the proof this theorem holds for $max\{S_{i-1}\}$. Now, we show that this also holds for $min\{S_i\}$.

For the set $S_i$, 
\begin{align*}
f(min\{S_i\}) = i+1 \implies i = f(min\{S_i\}) -1.
\end{align*}
We have started, the counting of primes for the set $S_i$ from $p_i$, where $p_i$ is the $i$-th prime.\\
Then for the set $S_{i-1}$ we count $max\{S_{i-1}\}-1$ primes from $p_{i-1}$. So, clearly between the gap 
\begin{align*}
\left[f(min\{S_i\}) = f(max\{S_{i-1}\})+1, max\{S_{i-1}\} \times f(max\{S_{i-1}\})\right],
\end{align*}
there are at least $max\{S_{i-1}\}-2$ primes.

Now, we have to show that there are at least 2-primes between the gap,
\begin{align*}
\left[max\{S_{i-1}\}f(max\{S_{i-1}\}), (max\{S_{i-1}\}+1)f(max\{S_{i-1}\}+1)\right].
\end{align*}
We have 
\begin{align*}
max\{S_{i-1}\}+1 = min\{S_i\}
\end{align*}
and 
\begin{align*}
f(max\{S_{i-1}\})+1 = f(min\{S_i\}).
\end{align*}
By the prime gap function $g_n$, and Lemma~\ref{lem1}, we have proved that there are at least 2-primes between the gap,
\begin{align*}
\left[max\{S_{i-1}\}f(max\{S_{i-1}\}), (max\{S_{i-1}\}+1)f(max\{S_{i-1}\}+1)\right].
\end{align*}
Induction shows that this conjecture is true for all set $S_i$.
From the $first$ and $second$ part of this proof we can say that this theorem is true for all elements of the set $S_i$, and it is also true for all such set $S_i, \forall i = 1, 2, \ldots$, which implies that there are at least $(k-1)$ primes between $n$ and $kn$, when $n \geq f(k)$, where $f(k) = \lceil 1.1 \ln(2.5k)\rceil$ and $k \in \mathbb{Z}$. 

We have proved the $Generalization$ $of$ $Bertrand's$ $Postulate$. 
\end{proof}

\section{Upper bound on the number of primes}
In this section, we prove the upper bound on the number of primes for the interval $[n,kn]$, where $n, k$ are positive integers greater than equal to 2. To prove our main result of this section, first we state and prove the following lemma.
\begin{lemma}\label{lem3}
Let $n$ be a non-zero positive integer $\geq 5$. Then the inequality
\begin{align*}
\left(\frac{2n}{9}+4\right) \times \left(\log n \ln (n \ln n)\right)^2 > n
\end{align*}
holds.
\end{lemma}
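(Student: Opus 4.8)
The statement to prove is that for every integer $n \geq 5$,
\[
\left(\frac{2n}{9}+4\right)\bigl(\log n \ln (n\ln n)\bigr)^2 > n.
\]
Since the factor $\frac{2n}{9}+4$ already dominates $n$ once the logarithmic factor is bounded below by a constant, the whole game is to control $\bigl(\log n\ln(n\ln n)\bigr)^2$ from below. The plan is therefore to reduce everything to a single elementary inequality in one real variable and then close it by monotonicity plus a finite check.

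First I would observe that it suffices to prove the cleaner inequality $\frac{2n}{9}\bigl(\log n\ln(n\ln n)\bigr)^2 \geq n$, i.e.
\[
\bigl(\log n\ln(n\ln n)\bigr)^2 \geq \frac{9}{2},
\]
since the ``$+4$'' term only helps; in fact dropping it loses essentially nothing. Next, for $n \geq 5$ we have $\ln n \geq 1$, hence $n\ln n \geq n \geq 5$, hence $\ln(n\ln n) \geq \ln 5 > 1.6$. Here one must be careful about whether $\log$ denotes natural log or base-$10$ log in the paper's convention; I would treat $\log$ as $\ln$ (consistent with Corollary~\ref{cor1}), so that $\log n = \ln n \geq \ln 5 > 1.6$ as well for $n \geq 5$. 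Then $\log n\ln(n\ln n) \geq (\ln 5)^2 > 2.59$, so its square exceeds $6.7 > 9/2$, and we are done for all $n \geq 5$ in one stroke. (If instead $\log$ means $\log_{10}$, the argument still works but the threshold shifts: one notes $\log_{10} n \to \infty$ and checks the few small $n$ by hand, using that the $\frac{2n}{9}+4$ factor is already $\geq 4$ and grows.)

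The cleanest write-up, which I would adopt, is: (i) restrict to $n \geq 5$ so that $\ln n \geq 1$; (ii) deduce $n\ln n \geq n$ and hence $\ln(n\ln n) \geq \ln n \geq \ln 5$; (iii) conclude $\bigl(\log n\ln(n\ln n)\bigr)^2 \geq (\ln 5)^4 > 9/2$; (iv) therefore $\left(\frac{2n}{9}+4\right)\bigl(\log n\ln(n\ln n)\bigr)^2 > \frac{2n}{9}\cdot\frac{9}{2} = n$. Each step is a one-line monotonicity remark, with no calculus needed beyond the monotonicity of $\ln$.

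The only real obstacle is a bookkeeping one rather than a mathematical one: pinning down the logarithm base and making sure the constant $\frac{9}{2}$ genuinely drops out, i.e.\ that the coefficient $\frac{2}{9}$ in $\frac{2n}{9}+4$ is matched by a lower bound of $\frac{9}{2}$ on the log-squared factor. Because $(\ln 5)^4 \approx 6.72 > 4.5$ with comfortable room to spare, there is slack, so even a slightly lossy estimate (for instance replacing $\ln(n\ln n)$ by $\ln n$ and then $\ln n$ by $\ln 5$) still succeeds; no sharp estimate is required. I do not anticipate needing the inductive or case-based machinery used in Lemmas~\ref{lem1} and~\ref{lem2} here — this lemma is genuinely a soft monotonicity statement.
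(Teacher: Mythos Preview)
Your direct monotonicity argument is correct and cleaner than the paper's approach. The paper proceeds by induction on $n$: the base case $n=5$ is checked numerically, and the inductive step decomposes $\frac{2(n+1)}{9}+4 = \bigl(\frac{2n}{9}+4\bigr) + \frac{2}{9}$, applies the hypothesis to the first summand (after tacitly replacing the log factor at $n$ by its value at $n+1$, using monotonicity), and then finishes by asserting $\frac{2}{9}\bigl(\log\cdots\bigr)^2 > 1$. But that last step is precisely your inequality $\bigl(\log\cdots\bigr)^2 > 9/2$, so the induction does no real work --- its inductive step already contains the full strength of the claim, which you isolate and prove outright. One parsing caveat: in context (see the proof of Theorem~\ref{thm2}, where $(\log p_n)^2$ is bounded via the upper bound $p_n < n\ln(n\ln n)$ from Corollary~\ref{cor1}) the expression is meant as $\bigl(\log(n\ln(n\ln n))\bigr)^2$, a single nested logarithm rather than the product $(\log n)\cdot\ln(n\ln n)$ you read; moreover the paper's use of $\log 4 \approx 0.6020$ elsewhere indicates base~$10$. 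Under that reading your ``drop the $+4$'' shortcut fails at $n=5$ (the squared log is only about $1.04$ there), so you genuinely need the finite check you flagged in your base-$10$ aside, but the overall strategy is unaffected.
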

\begin{proof}
The proof is by induction on $n$. 

For the base step of the induction consider $n = 5$, we can write,
\begin{align*}
&\left(\frac{2 \times 5}{9}+4\right) \times \left(\log 5 \ln (5 \ln 5)\right)^2 \\
&=  5.111 \times 0.018 = 5.202 > 5.
\end{align*}
So, the base case holds.\\
Now, let us assume that this inequality is true for some $n$. We have,
\begin{align*}
&\left(\frac{2n}{9}+4\right) \times \left(\log n \ln (n \ln n)\right)^2 > n
\end{align*}

Now,

\begin{align*}
&\left(\frac{2n}{9}+4\right) \times \left(\log (n+1) \ln ((n+1) \ln (n+1)\right)^2 +\\
&\frac{2}{9} \times  \left(\log (n+1) \ln ((n+1) \ln (n+1)\right)^2\\
>&n+  \frac{2}{9} \times  \left(\log (n+1) \ln ((n+1) \ln (n+1)\right)^2\\
>&n+1.
\end{align*}
This implies, the inequality holds for $(n+1)$ as well. 
\end{proof}

Now we prove our second main result~\cite{mps}.
\begin{theorem}\label{thm2}
Given a positive integer $k$, then the number of primes between $n$ and $kn$, for any positive integer $n$, is bounded by $\frac{kn}{9}+k^2$.
\end{theorem}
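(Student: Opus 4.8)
The plan is to reduce the count to a telescoping gap estimate, dispatch the resulting inequality with Lemma~\ref{lem3}, and clear a finite range of small $(n,k)$ separately. Let $q_1 < q_2 < \cdots < q_M$ be the primes of $[n,kn]$. Summing consecutive differences,
\[
  (k-1)n \;\ge\; q_M - q_1 \;=\; \sum_{i=1}^{M-1}\bigl(q_{i+1}-q_i\bigr),
\]
so any lower bound on those gaps yields an upper bound on $M$. Each $q_i \ge n$, so the gaps involved are $g_j$ with $p_j \ge n$; here I would invoke a lower bound of the form $g_j \ge (\log p_j)^2 \ge (\log n)^2$ for the relevant $j$, in the spirit of Proposition~\ref{prop5} (and consistent with the heuristics attached to Firoozbakht's conjecture, Corollary~\ref{prop3}). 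Substituting into the display gives $M-1 \le (k-1)n/(\log n)^2$, hence $M \le kn/(\log n)^2 + 1$.

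It then remains to compare $kn/(\log n)^2 + 1$ with the target $kn/9 + k^2$. As $k \ge 2$ forces $k^2 \ge 4 > 1$, it suffices that $(\log n)^2 \ge 9$, i.e.\ that $n$ exceeds a modest explicit bound; this is precisely the type of elementary inequality packaged --- in a sharpened form that feeds in the estimate $p_n < n\ln(n\ln n)$ of Corollary~\ref{cor1} --- by Lemma~\ref{lem3}, which in particular covers the base value $k=2$, where the claim reads $\tfrac{2n}{9}+4$. For the finitely many pairs $(n,k)$ below this threshold I would argue by hand: if $n \le 2k$ then the number of primes in $[n,kn]$ is at most $\pi(kn)\le\pi(2k^2)\le k^2$ (using $\pi(x)\le x/2$ for $x\ge 8$), so the summand $k^2$ alone does the job; the remaining region, $n$ small and $n > 2k$, is a bounded rectangle of integer pairs, checkable by inspection.

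For large $n$ there is also an unconditional shortcut, which moreover confines the gap input to a moderate range: by Proposition~\ref{prop6} the $M$ primes in $[n,kn]$ all divide $(kn)\#\le 4^{kn}$, and each is $\ge n$, so $n^{M}\le 4^{kn}$ and $M \le (kn\ln 4)/\ln n$, which is already at most $kn/9$ once $\ln n \ge 9\ln 4$. Hence the bound is comfortable for $n$ large, and only the moderate range genuinely needs the gap estimate.

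The main obstacle is exactly this moderate range. Unconditionally one cannot lower-bound consecutive prime gaps by anything that grows --- twin primes, should they be infinite, give gaps of size $2$ arbitrarily far out --- so the telescoping step must rest on a conjectural gap bound such as Proposition~\ref{prop5}; being honest then forces one either to accept that hypothesis (which the statement's dependence on Firoozbakht's conjecture already concedes) or to replace it, over the moderate range, by a finite verification. Delimiting that range cleanly --- showing $M \le kn/(\log n)^2 + 1 \le kn/9 + k^2$ becomes automatic once $n$ passes a small bound, and that the $k^2$ term absorbs the rest once $n \le 2k$ --- is the delicate part of the argument.
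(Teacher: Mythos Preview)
Your plan is essentially the paper's own argument: both telescope $(k-1)n \ge \sum_i (q_{i+1}-q_i)$, feed in the gap lower bound of Proposition~\ref{prop5} to obtain a count of order $(k-1)n/(\log p_n)^2$, and then compare against $kn/9+k^2$ using the elementary inequality of Lemma~\ref{lem3} (with $p_n$ replaced by the upper bound $n\ln(n\ln n)$ from Corollary~\ref{cor1}). The paper wraps that final comparison in an induction on $k$ --- base case $k=2$ is exactly Lemma~\ref{lem3}, and the inductive step adds $(n/9+2k+1)(\log n\ln(n\ln n))^2$ to both sides --- whereas you observe directly that $(\log n)^2\ge 9$ suffices and push the rest into the $k^2$ term; this is only a cosmetic difference.

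Where you diverge from the paper is in your favour. Your unconditional primorial bound $n^M\le (kn)\#\le 4^{kn}$ for large $n$ is not in the paper and is a genuine addition. More importantly, you correctly flag that Proposition~\ref{prop5} is the Granville--Pintz heuristic asserting $g_n > c(\log p_n)^2$ only \emph{infinitely often}, not for all $n$, so it cannot serve as a uniform lower bound in the telescoping step. The paper applies it as though it were uniform (``if we take the minimum gaps between primes that is the lower bound of prime gap function, then we can count the maximum number of primes''), which is precisely the obstacle you isolate. So the gap you worry about is present in the paper's proof as well; your plan is at least as complete, and more candid about where the conditional input enters and why a finite verification would be needed to close it.
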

\begin{proof}
The proof is by induction on $k$.

For the base step of the induction consider $k = 2$, the upper bound on the number of primes between the gap $[n, 2n]$ is $\frac{2n}{9}+4$.
To prove this we use the lower bound of the prime gap function $g_n$, given by Proposition~\ref{prop5}, as Corollary~\ref{prop3} implies a strong form of Cramer's conjecture~\cite{crcn}\cite{crcn1}\cite{crcn2}, but is inconsistent with the heuristics of Granville and Pintz which suggests the following:
\begin{equation*}
g_n > \frac{2- \varepsilon}{e^\gamma}(\log p_n)^2.
\end{equation*}
infinitely often for any $\varepsilon > 0$, where $\gamma$ denotes the Euler Mascheroni constant.
We can approximate the part 
\begin{align*}
\frac{2-\varepsilon}{e^\gamma}.
\end{align*}
Since, $g_n$ is a prime gap function, so it can not be negative. The maximum possible value of $\varepsilon$ is 2.

As we know the value of 
\begin{align*}
&e \approx 2.718 \ldots, \text{ and } \gamma \approx 1.781062\ldots\\
\implies & \frac{2-\varepsilon}{e^\gamma} \approx 0.561 \ldots \approx 1.
\end{align*}
We are using the counting method to prove this upper bound. If we take the minimum gaps between primes that is the lower bound of prime gap function, then we can count the maximum number of primes for any interval. 
To do so we have to prove the inequality,
\begin{align*}
\frac{2n}{9}+4 \times (\log p_n)^2 > 2n - n = n.
\end{align*}
We will use the upper bound of the $n$-th prime $p_n$, which we have stated in the section of known result. So, the inequality becomes,
\begin{align*}
\frac{2n}{9}+4 \times \left(\log n \ln (n \ln n)\right)^2 > 2n - n = n.
\end{align*}

Thanks to  Lemma~\ref{lem3} base case holds.  we can now modify the proof of this theorem inductively. Now let us assume that the inequality,
\begin{align*}
\left(\frac{kn}{9}+k^2\right) \times \left(\log (n \ln (n \ln n)) \right)^2 > kn - n = (k-1)n.
 \end{align*}
is true. So, we can write,
\begin{align*}
&\left(\frac{kn}{9}+k^2\right) \times  \left(\log (n \ln (n \ln n)) \right)^2 + \left(\frac{n}{9} + 2k + 1\right) \times  \left(\log (n \ln (n \ln n)) \right)^2\\
=&\left(\frac{(k+1)n}{9}+(k+1)^2\right) \times  \left(\log (n \ln (n \ln n)) \right)^2 \\
>& \left((k-1)n\right) +  \left(\frac{n}{9} + 2k + 1\right) \times  \left(\log (n \ln (n \ln n)) \right)^2\\
>& (k-1)n + n\\
=& kn.
\end{align*} 
Induction shows that for a given  positive integer $k$, the number of primes between $n$ and $kn$, for any positive integer $n$, is bounded by $\frac{kn}{9}+k^2$. 
\end{proof}

\section{Gaps between Primes}
In this section, we construct the interval $[n, g(n)]$, for $g(n) = n+\frac{n}{\lceil{1.1 \ln (2.5n)}\rceil}$, containing at least one prime. This is our third main result and it is stronger than Bertrand's Postulate and J. Nagura's Theorem. 

\begin{theorem}\label{gapthm}
There exist at least one prime number $p$ with,\\ $kf(k) <p < k(f(k)+1)$, where $f(k) =  \lceil 1.1 \ln(2.5k)\rceil$ and $k \in \mathbb{Z}^+$.
\end{theorem}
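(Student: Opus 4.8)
The plan is to exhibit the required prime as the immediate successor of the largest prime not exceeding $kf(k)$, and to control the gap there by the Firoozbakht-type bound of Corollary~\ref{prop3}. Let $p_m$ denote the largest prime with $p_m\le kf(k)$; by maximality the next prime already satisfies $p_{m+1}>kf(k)$, so the theorem follows once I show $p_{m+1}<k(f(k)+1)=kf(k)+k$. Since $g_m=p_{m+1}-p_m$ is a positive integer and $p_m\le kf(k)$, it is enough to prove the single clean inequality $g_m<k$: this forces $g_m\le k-1$, hence $p_{m+1}\le kf(k)+k-1<k(f(k)+1)$, so $p_{m+1}$ lies in the open interval $\bigl(kf(k),k(f(k)+1)\bigr)$, as wanted.

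Now I would invoke Corollary~\ref{prop3}, which applies once the index $m>4$, i.e. once $kf(k)\ge 11$, hence from $k\ge 4$ onward: it gives $g_m<(\log p_m)^2-\log p_m$, and since $t\mapsto t^2-t$ is non-decreasing for $t\ge\tfrac12$ while $\tfrac12<\log p_m\le\log(kf(k))$, this yields $g_m<\bigl(\log(kf(k))\bigr)^2-\log(kf(k))$. So the whole theorem reduces, for all but finitely many $k$, to the elementary inequality
\begin{equation*}
\bigl(\log(kf(k))\bigr)^2-\log(kf(k))<k,\qquad f(k)=\lceil 1.1\ln(2.5k)\rceil .
\end{equation*}
I would establish this in the spirit of Lemma~\ref{lem3}: on each maximal block of integers $k$ on which the ceiling is constant, $f(k)=j$, the left-hand side equals $(\log jk)^2-\log jk$, and the smooth function $k\mapsto k-(\log jk)^2+\log jk$ is non-decreasing as soon as $2\log jk-1<k$, so on each block the inequality reduces to a single numerical check near its left endpoint. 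Carrying this out shows the inequality holds for every $k\ge 4$ except $k\in\{7,8,9\}$, with a widening margin for large $k$ because $f(k)\sim 1.1\ln(2.5k)$ makes $\log(kf(k))\sim\log k$ against a linear right-hand side.

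The residual obstacle is therefore concentrated in the small range, and this is the step requiring genuine care rather than a formula. Corollary~\ref{prop3} is not available for $k\le 3$, and the crude estimate above is really too weak at $k\in\{7,8,9\}$ (there the right-hand side of the displayed inequality exceeds $k$); all of these finitely many cases I would settle by direct inspection, exhibiting an explicit prime in $\bigl(kf(k),k(f(k)+1)\bigr)$ --- for instance $5\in(4,6)$, $11\in(9,12)$, $29\in(28,35)$, $37\in(32,40)$ and $37\in(36,45)$ for $k=2,3,7,8,9$. Finally one should record, exactly as for Theorem~\ref{thm2}, that the conclusion inherits its dependence on Firoozbakht's conjecture through Corollary~\ref{prop3}, and that for $k=1$ the interval $(2,3)$ contains no integer, so the statement is to be read for $k\ge 2$.
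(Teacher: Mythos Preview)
Your argument is correct and takes a genuinely different, cleaner route than the paper's. The paper proceeds by induction on $k$, splitting into the two cases $f(k+1)=f(k)$ and $f(k+1)=f(k)+1$, and in each inductive step tries to push a prime $p_m$ known to lie in $[kf(k),k(f(k)+1)]$ forward into the next interval by bounding the accumulated sum $\sum_{i=1}^{r}g_{m+i}$ via Corollary~\ref{prop3} together with the primorial estimate of Proposition~\ref{prop6}. You bypass this tracking entirely: you fix the \emph{single} gap $g_m$ at the largest prime $p_m\le kf(k)$, observe that $p_{m+1}>kf(k)$ automatically, and reduce the theorem to the one-line target $g_m<k$, which you then control by Corollary~\ref{prop3} and monotonicity of $t\mapsto t^2-t$. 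This buys you a much shorter argument with a transparent obstruction set $\{2,3,7,8,9\}$ (and the degenerate $k=1$), each disposed of by inspection; the paper's inductive bookkeeping is replaced by one explicit inequality $(\log kf(k))^2-\log(kf(k))<k$.

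Two points you should firm up. First, your proof of that displayed inequality is only sketched: the per-block monotonicity reduces each constant-$f$ block to its left endpoint, but there are infinitely many blocks, so you still need either an explicit threshold $k_0$ beyond which $(\log kf(k))^2<k$ holds by the asymptotic $\log(kf(k))\sim\log k$, together with a finite check up to $k_0$, or a uniform estimate such as $f(k)\le 1.1\ln(2.5k)+1$ feeding into a clean bound. Second --- and you already flag this --- both your proof and the paper's rest on Corollary~\ref{prop3}, hence on Firoozbakht's conjecture; this dependence should be stated alongside the theorem, just as the paper does for Theorem~\ref{thm2}.
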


\begin{proof}
The proof is by induction on $k$. We prove this theorem into two parts. For the $first$ $part$, we use inductive argument on $k$, for which $f(k)$ is same. In the $second$ $part$, we use inductive argument on $k$, for which $f(k+1) = f(k)+1$.

\textbf{ First Part:} In this part we prove that this theorem is true for all $k$, such that $f(k+1) = f(k)$.
For the base step of the induction consider, $k = 2$ and $f(2) = 2$. Trivially, we can write, $4 < 5 < 6$, where $p_3 = 5$.
To complete our inductive argument now let us assume that, there exist a prime $p_m$ between the gap
\begin{align*}
\left[kf(k), k(f(k)+1)\right],
\end{align*} 
for some integer $m$, and $p_m$ is the $m$-th prime. Then we have to prove that, there exist at least one prime between the gap,
\begin{align*}
\left[(k+1)f(k), (k+1)(f(k)+1)\right].
\end{align*} 
Trivially, we can write $m < kf(k)$. So, if we can prove that,
\begin{align*}
p_{m+r} \in \left[(k+1)f(k), (k+1)(f(k)+1)\right],
\end{align*} 
for some integer $r \geq 1$ then we are done. By our assumption,
\begin{equation*}
 kf(k) < p_m < kf(k)+k,
\end{equation*}
this inequality holds. Then we have to prove that there exist at least one prime between the gap,
\begin{align*}
\left[(k+1)f(k), (k+1)(f(k)+1)\right].
\end{align*}
There are two possible cases to consider.

\textbf{Case (i):} If $p_m > (k+1)f(k)$, then there is nothing to prove.

\medskip

\textbf{Case (ii):} If $p_m < (k+1)f(k)$, then we have to show that
\begin{align*}
p_{m+r} < (k+1)(f(k+1)),
\end{align*}
for some integer $r \geq 1$. So we can write,
\begin{align*}
&p_{m+r} = p_m+\sum_{i=1}^{r} g_{m+i} < (k+1)f(k)+\sum_{i=1}^{r} \left(\left(\log p_{m+i}\right)^2-\log p_{m+i}\right)\\
\implies&p_m+\sum_{i=1}^{r} g_{m+i} < (k+1)f(k)+\sum_{i=1}^{r} \left(\log p_{m+i}\right)^2-\sum_{i=1}^{r} \left(\log p_{m+i}\right)\\
=&(k+1)f(k)+\left(\log p_{m+1}+ \ldots +\log p_{m+r}\right)^2-\left(\log p_{m+1}+ \ldots +\log p_{m+r}\right)\\
=&(k+1)f(k)+\left(\log {\frac{p_{m+r}\#}{p_m\#}}\right)^2-\left(\log {\frac{p_{m+r}\#}{p_m\#}}\right).\\
\end{align*}
From the proposition~\ref{prop6}, we can write,
\begin{align*}
&(k+1)f(k)+\left(\log {\frac{p_{m+r}\#}{p_m\#}}\right)^2-\left(\log {\frac{p_{m+r}\#}{p_m\#}}\right)\\
<&(k+1)f(k)+\left(\log {\frac{4^{p_{m+r}}}{4^{p_m}}}\right)^2-\left(\log {\frac{4^{p_{m+r}}}{4^{p_m}}}\right)\\
<&(k+1)f(k)+\left((p_{m+r}-p_m) \log 4\right)^2-\left((p_{m+r}-p_m) \log 4\right)\\
=&(k+1)f(k)+\left((p_{m+r}-p_m) \times 0.6020\right)^2-\left((p_{m+r}-p_m) \times 0.6020\right)\\
<&(k+1)f(k)+\left((p_{m+r}-(k+1)f(k)) \times 0.6020\right)^2-\left((p_{m+r}-(k+1)f(k)) \times 0.6020\right)\\
<&(k+1)f(k)+(k+1) = (k+1)(f(k)+1).
\end{align*}
So, Induction shows that there exists a prime between the gap
\begin{align*}
\left[(k+1)f(k), (k+1)(f(k)+1)\right].
\end{align*}
\textbf{ Second Part:} In this part we prove that this theorem is true $k$, such that $f(k+1) = f(k)+1$.

For the base step of the induction, $k = 2$ and $f(2) = 2$. The desired inequality holds trivially.

Now we start our inductive argument. Let us assume that there exists a prime $p_m$ between the gap $kf(k)$ and $k(f(k)+1)$, for some integer $m$, and $p_m$ (the $m$-th prime). Then we have to show that there exist one prime between the gap,
\begin{align*}
\left[(k+1)(f(k)+1), (k+1)(f(k)+2)\right].
\end{align*}
So, if we can prove that
\begin{align*}
p_{m+r} \in \left[(k+1)(f(k)+1), (k+1)(f(k)+2)\right],
\end{align*}
for some integer $r \geq 1$, then we are done. By our assumption,
\begin{equation*}
 kf(k) < p_m < kf(k)+k,
\end{equation*}
holds. Then we have to prove that there exist at least one prime between the gap
\begin{align*}
\left[(k+1)(f(k)+1), (k+1)(f(k)+2)\right].
\end{align*}
There are two possible cases.

\textbf{ Case (i):} If $p_m > (k+1)(f(k)+1)$, then there is nothing to prove.

\medskip

\textbf{ Case (ii):} If $p_m <(k+1)(f(k)+1)$, then we have to show that,
\begin{align*}
p_{m+r} < (k+1)(f(k+2)),
\end{align*}
for some integer $r \geq 1$. So, we can write,
\begin{align*}
&p_{m+r} = p_m+\sum_{i=1}^{r} g_{m+i} < (k+1)(f(k)+1)+\sum_{i=1}^{r} \left(\left(\log p_{m+i}\right)^2-\log p_{m+i}\right)\\
\implies&p_m+\sum_{i=1}^{r} g_{m+i} < (k+1)(f(k)+1)+\sum_{i=1}^{r} \left(\log p_{m+i}\right)^2-\sum_{i=1}^{r} \left(\log p_{m+i}\right)\\
=&(k+1)(f(k)+1)+\left(\log p_{m+1}+ \ldots +\log p_{m+r}\right)^2-\left(\log p_{m+1}+ \ldots +\log p_{m+r}\right)\\
=&(k+1)(f(k)+1)+\left(\log {\frac{p_{m+r}\#}{p_m\#}}\right)^2-\left(\log {\frac{p_{m+r}\#}{p_m\#}}\right).\\
\end{align*}
By the Proposition~\ref{prop6}, we can write,
\begin{align*}
&(k+1)(f(k)+1)+\left(\log {\frac{p_{m+r}\#}{p_m\#}}\right)^2-\left(\log {\frac{p_{m+r}\#}{p_m\#}}\right)\\
<&(k+1)(f(k)+1)+\left(\log {\frac{4^{p_{m+r}}}{4^{p_m}}}\right)^2-\left(\log {\frac{4^{p_{m+r}}}{4^{p_m}}}\right)\\
<&(k+1)(f(k)+1)+\left((p_{m+r}-p_m) \log 4\right)^2-\left((p_{m+r}-p_m) \log 4\right)\\
=&(k+1)(f(k)+1)+\left((p_{m+r}-p_m) \times 0.6020\right)^2-\left((p_{m+r}-p_m) \times 0.6020\right)\\
<&(k+1)(f(k)+1)+\left((p_{m+r}-(k+1)(f(k)+1)) \times 0.6020\right)^2\\
&-\left((p_{m+r}-(k+1)(f(k)+1)) \times 0.6020\right)\\
<&(k+1)(f(k)+1)+(k+1) = (k+1)(f(k)+2).
\end{align*}
Hence, by the Induction Hypothesis we have proved that there exists a prime between the gap,
\begin{align*}
\left[(k+1)(f(k)+1),(k+1)(f(k)+2))\right].
\end{align*}
By the $first$ and $second$ part of the proof and Induction Hypothesis theorem~\ref{gapthm} is true. 
\end{proof}

\begin{rem}
The statement in the Theorem~\ref{gapthm}, states about the gap, between which there exist at least one prime. This gap is smaller than the gap which stated in Bertrand's Postulate and J Nagura's Theorem for all $k \geq 38$. To see this, let
\begin{align*}
&n = k \times f(k), \mbox{where} f(k) = \lceil 1.1\ln(2.5k) \rceil
\implies n \geq k,
\end{align*}
because for all $k \geq 2$, $f(k) \geq 2$. In Bertrand's Postulate, the length of the gap is $n$ and in our result the length of the gap is $k$. Similarly, we can prove that our gap is smaller than that of J Nagura's Theorem. 
\end{rem}
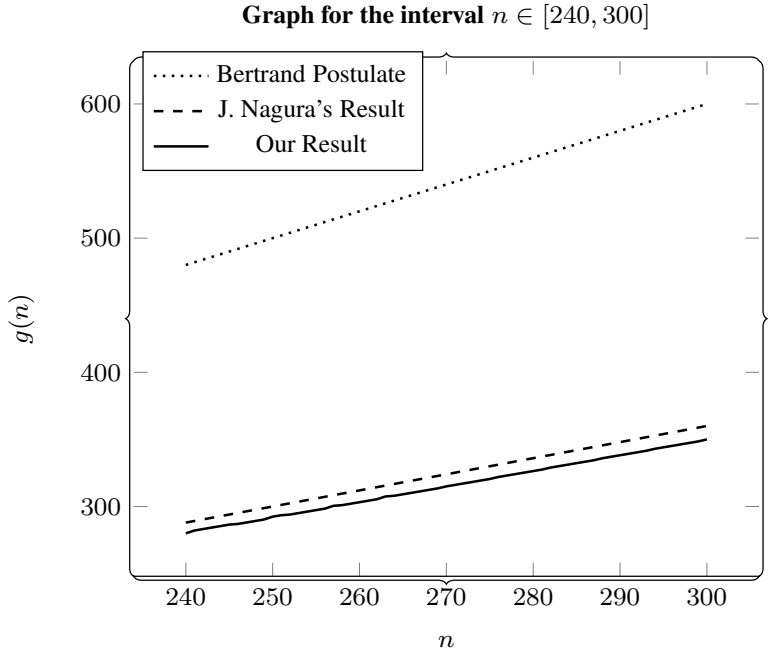
\begin{figure}[h]
\centering
\begin{minipage}[t]{1.0\textwidth}
\centering
\begin{tikzpicture}[scale=1.2,font=\scriptsize]

\begin{axis}[title={\textbf{Graph for the interval $n\in[240,300]$}},xlabel={n \in mathbb{N}},xlabel={$n$},ylabel={$g(n)$},legend style={at={(axis cs:235,550)},anchor=south west}]
\addplot[mark=none,dotted,thick, color=black] table[x index=0,y index=1,col sep=comma] {datax.dat};
\addlegendentry{Bertrand Postulate}

\addplot[mark=none,dashed,thick, color=black] table[x index=0,y index=2,col sep=comma] {datax.dat};
\addlegendentry{J. Nagura's Result}

\addplot[mark=none,solid,thick, color=black] table[x index=0,y index=3,col sep=comma] {datax.dat};
\addlegendentry{Our Result}

\end{axis}

\end{tikzpicture}
\caption{\scriptsize {Comparing the interval between which a prime must exist, according to Bertrand' Postulate, J. Nagura's theorem and Our Result}}
\label{f3}
\end{minipage}

\end{figure}

In this graph, we have compared the different intervals $(n, g(n))$, where 
$g(n) = 2n$ for Bertrand Postulate (dotted line),  $g(n) = \frac{6}{5}n$ for J. Nagura's Theorem (dashed line) and $g(n) = n+\frac{n}{\lceil 1.1\ln(2.5n)\rceil}$ for our Theorem~\ref{gapthm} (solid line). Note that, for $n = 240$, the value of $g(n)$ is 
480, 288 and 280 respectively for the three cases.

\section{Conclusion}
In this paper, we have proved Generalization of Bertrand's Postulate and upper bound on the number of primes in the interval $n$ and $kn$, for any positive integer $n$ and $k \geq 1$. To derive the lower bound, no unproven conjecture is required. However, to prove the upper bound, we have used the lower bound of the prime gap function, which comes as a consequence of Firoozbakht's conjecture. 
Finally, we have also been able to tighten the threshold beyond $n$, before which there must exit at least one prime.

\makeatletter
\renewcommand{\@biblabel}[1]{[#1]\hfill}
\makeatother

\end{document}